\newtheorem{theorem}{Theorem}[section]
\newtheorem{lemma}[theorem]{Lemma}
\theoremstyle{definition}
\newtheorem{definition}[theorem]{Definition}
\newtheorem{proposition}[theorem]{Proposition}
\newtheorem{corollary}[theorem]{Corollary}
\newtheorem{Example}[theorem]{Example}
\newtheorem{Q}[theorem]{Question}
\theoremstyle{remark}
\newtheorem{remark}[theorem]{Remark}
\numberwithin{equation}{section}
\begin{document}

\title[Proper holomorphic maps]{On proper holomorphic maps between bounded symmetric domains}
\author{Shan Tai Chan}
\address{Department of Mathematics, The University of Hong Kong, Pokfulam Road, Hong Kong}
\curraddr{}
\email{mastchan@hku.hk}
\thanks{}

\subjclass[2010]{Primary 32M15, 53C55, 53C42}

\dedicatory{}

\begin{abstract}
We study proper holomorphic maps between bounded symmetric domains $D$ and $\Omega$.
In particular, when $D$ and $\Omega$ are of the same rank $\ge 2$ such that all irreducible factors of $D$ are of rank $\ge 2$, we prove that any proper holomorphic map from $D$ to $\Omega$ is a totally geodesic holomorphic isometric embedding with respect to certain canonical K\"ahler metrics of $D$ and $\Omega$.
We also obtain some results regarding holomorphic maps $F:D\to \Omega$ which map minimal disks of $D$ properly into rank-$1$ characteristic symmetric subspaces of $\Omega$.
On the other hand, we obtain new rigidity results regarding semi-product proper holomorphic maps between $D$ and $\Omega$ under a certain rank condition on $D$ and $\Omega$.
\end{abstract}

\maketitle
\bibliographystyle{amsalpha}

\section{Introduction}
In \cite{Ts93}, Tsai has proven that if $F$ is a proper holomorphic map from an irreducible bounded symmetric domain $D$ to a bounded symmetric domain $\Omega$ with the assumption that $\mathrm{rank}(D)\ge \mathrm{rank}(\Omega) \ge 2$, then $\mathrm{rank}(D)= \mathrm{rank}(\Omega)$ and $F$ is a totally geodesic holomorphic isometric embedding with respect to the Bergman metrics up to a normalizing constant.
In general, a proper holomorphic map $f$ between reducible bounded symmetric domains $D_1$ and $D_2$ of equal rank $\ge 2$ can be nonstandard (i.e., not totally geodesic) when the domain $D_1$ of $f$ is reducible and has a rank-$1$ irreducible factor.
We will give an example of such a proper holomorphic map. This example will also allow us to formulate an appropriate rigidity theorem (i.e., Theorem \ref{thm:TrIrr2}) for proper holomorphic maps between reducible bounded symmetric domains.
Let $D$ and $\Omega$ be irreducible bounded symmetric domains of rank $\ge 2$.
In \cite{Ng15}, Ng has proven that if a holomorphic map $f:D\to \Omega$ maps minimal disks of $D$ properly into the rank-$1$ characteristic symmetric subspaces of $\Omega$, then $f$ is a totally geodesic holomorphic isometric embedding with respect to the Bergman metrics up to a normalizing constant.

In the first part of this article, we will study proper holomorphic maps between (reducible) bounded symmetric domains along the lines of Ng \cite{Ng15}.
For an irreducible bounded symmetric domain $U\Subset \mathbb C^n$, we let $g_U$ be the canonical K\"ahler-Einstein metric on $U$ normalized so that minimal disks of $U$ are of constant Gaussian curvature $-2$, and we denote by $\omega_{g_U}$ the K\"ahler form of $(U,g_U)$.
Then, $g_U$ agrees with the standard complex Euclidean metric of $\mathbb C^n$ at ${\bf 0}$.
For K\"ahler manifolds $(M,g_M)$ and $(N,g_N)$ with the corresponding K\"ahler forms $\omega_{g_M}$ and $\omega_{g_N}$ respectively, a holomorphic map $F:(M,g_M)\to (N,g_N)$ is said to be \emph{isometric} if and only if $F^*\omega_{g_N}=\omega_{g_M}$.
In addition, a holomorphic map $F:(M,g_M)\to (N,g_N)$ is said to be \emph{an isometric map up to a normalizing constant} if and only if $F^*\omega_{g_N}=\lambda \omega_{g_M}$ for some positive real constant $\lambda$.
Motivated by \cite[Proposition 1.2]{Ng15}, we will also study holomorphic maps $F:D\to \Omega$ between (reducible) bounded symmetric domains $D$ and $\Omega$ which map minimal disks of $D$ properly into rank-$1$ characteristic symmetric subspaces of $\Omega$.
In this direction, we have the following generalization of \cite[Proposition 1.2]{Ng15}.

\begin{theorem}\label{thm:TrIrr1}
Let $F:D\to \Omega$ be a holomorphic map between bounded symmetric domains $D$ and $\Omega$ such that $F$ maps the minimal disks of $D$ properly into rank-$1$ characteristic symmetric subspaces of $\Omega$.
Suppose all irreducible factors of $D$ are of rank at least two.
Write $D=D_1\times\cdots \times D_k$ and $\Omega=\Omega_1\times\cdots \times \Omega_l$, where $D_j\Subset \mathbb C^{n_j}$, $1\le j\le k$, and $\Omega_j$, $1\le j\le l$, are the irreducible factors of $D$ and $\Omega$ respectively with $\mathrm{rank}(D_j)\ge 2$ for $1\le j\le k$.
Then, $F$ is a totally geodesic isometric embedding from $(D_1,g_{D_1})\times\cdots \times (D_k,g_{D_k})$ to $(\Omega_1,g_{\Omega_1})\times\cdots \times (\Omega_l,g_{\Omega_l})$.
\end{theorem}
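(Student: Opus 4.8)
The plan is to reduce the statement to the case of an irreducible target, where Ng's result applies slice by slice, and then to assemble the factors; the one genuinely new point is the mutual orthogonality of the images of distinct factors.

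\emph{Step 1 (minimal disks live in single factors; routing the factors).} Since the canonical metric of $D$ is the product metric and every $D_j$ has rank $\ge 2$, the variety of minimal rational tangents of $D$ is the disjoint union of those of the factors, so every minimal disk of $D$ lies in a single factor $D_j$. Fix $j$ and a base point, write $F=(F_1,\dots,F_l)$ with $F_m\colon D\to\Omega_m$, and let $\pi_{\Omega_m}\colon\Omega\to\Omega_m$ be the projection. For each minimal rational tangent $\alpha$ of $D_j$ the hypothesis forces $dF(\alpha)$ to be tangent to a rank-$1$ characteristic symmetric subspace of $\Omega$, hence to lie in a single factor $T\Omega_m$. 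For fixed $m$ the set of $\alpha$ with $\pi_{\Omega_m}\circ dF(\alpha)\ne 0$ is Zariski-open in the (irreducible, because homogeneous) affine cone over $\mathrm{VMRT}(D_j)$, and these sets are pairwise disjoint; since two nonempty Zariski-open subsets of an irreducible variety must meet, exactly one of them is nonempty, nonemptiness coming from properness, which makes the restriction to each minimal disk nonconstant. This defines $\sigma(j)$, independent of the base point by connectedness of $D$, with $dF_m|_{TD_j}=0$ for $m\ne\sigma(j)$; as $\mathrm{VMRT}(D_j)$ spans $TD_j$, the component $F_m$ is constant in the $D_j$-directions for every $m\ne\sigma(j)$. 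Writing $S_i=\sigma^{-1}(i)$ and $P_i=\prod_{j\in S_i}D_j$ we obtain a partition of $\{1,\dots,k\}$ and a factorization in which $F_i$ depends only on $P_i$; it therefore suffices to treat each $F_i\colon P_i\to\Omega_i$ separately, i.e. the case of an irreducible target.

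\emph{Step 2 (slice rigidity).} Fix $i$ and $j\in S_i$. For any base point the slice $\{\cdot\}\times D_j\times\{\cdot\}$ is mapped by $F_i$ into $\Omega_i$, sending minimal disks of $D_j$ properly into rank-$1$ characteristic symmetric subspaces of $\Omega_i$; when $\mathrm{rank}(\Omega_i)\ge 2$ this is exactly the hypothesis of \cite[Proposition 1.2]{Ng15}, so the slice restriction $D_j\to\Omega_i$ is a totally geodesic isometric embedding (the possibility $\mathrm{rank}(\Omega_i)=1$ is excluded, since a rank-$\ge 2$ domain admits no holomorphic totally geodesic isometric embedding into a ball). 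Consequently $F_i^*g_{\Omega_i}$ restricts to $g_{D_j}$ on each factor slice, so all diagonal blocks of the pulled-back Hermitian form are correct, and the image of each slice is a totally geodesic complex submanifold of $\Omega_i$, so the second fundamental form of $F_i$ vanishes on each single factor.

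\emph{Step 3 (assembly; the main obstacle).} What remains is exactly the mixed information: orthogonality $dF_i(T_pD_j)\perp dF_i(T_pD_{j'})$ for $j\ne j'$, which upgrades the diagonal-block identity to $F_i^*\omega_{g_{\Omega_i}}=\omega_{g_{P_i}}$, together with the vanishing of the mixed second fundamental form $\sigma(X_j,X_{j'})$. This is the crux, and it is not formal: the slice data determine only the diagonal blocks of $F_i^*g_{\Omega_i}$, whereas a Hermitian form can vanish on each summand of a direct sum without vanishing on the mixed terms. I would establish the orthogonality by analysing the totally geodesic bidisks $\Delta_j\times\Delta_{j'}$ spanned by minimal disks from two distinct factors, using the closedness of $F_i^*\omega_{g_{\Omega_i}}$ to propagate the vanishing of the diagonal blocks into the mixed blocks and invoking real-analytic continuation from the slices; here $\mathrm{rank}(D_j)\ge 2$ is essential, as it supplies inside each factor a positive-dimensional family of minimal rational tangents and strongly orthogonal minimal disks with which to pin down the mixed terms (this is precisely the mechanism that fails when a factor has rank $1$, the source of the nonstandard examples mentioned in the introduction). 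Once the isometry $F_i^*\omega_{g_{\Omega_i}}=\omega_{g_{P_i}}$ is known, the vanishing of the mixed second fundamental form is automatic: by the Gauss equation the holomorphic bisectional curvature of the image between the two factor directions equals $R^{\Omega_i}(dF_iX_j,\overline{dF_iX_j},dF_iX_{j'},\overline{dF_iX_{j'}})-|\sigma(X_j,X_{j'})|^2$, which must equal the corresponding product curvature $0$ of $P_i$; since $\Omega_i$ has non-positive holomorphic bisectional curvature, both the ambient curvature term and $|\sigma(X_j,X_{j'})|^2$ vanish. Combined with Step 2 this gives $\sigma\equiv 0$, so each $F_i$ is a totally geodesic isometric embedding.

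\emph{Conclusion.} Since the $F_i$ depend on disjoint sets of domain factors and each is a totally geodesic isometric embedding, their product $F$ is a totally geodesic isometric embedding of $(D_1,g_{D_1})\times\cdots\times(D_k,g_{D_k})$ into $(\Omega_1,g_{\Omega_1})\times\cdots\times(\Omega_l,g_{\Omega_l})$, as asserted. I expect Step 3, the cross-factor orthogonality, to be the decisive difficulty.
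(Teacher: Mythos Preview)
Your overall architecture is right, and you have correctly located the crux; but the paper executes two steps differently, and the mechanism you propose in Step~3 is not the one that works.

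Step~1 (routing) is unnecessary. The paper never reduces to an irreducible target; instead it records (Proposition~\ref{Pro:Pro1.2_Ng15}) that \cite[Proposition~1.2]{Ng15} already goes through verbatim when $\Omega$ is reducible, so each slice $F\circ\iota_{j,W}:D_j\to\Omega$ is directly a totally geodesic isometric embedding into the full product $(\Omega_1,g_{\Omega_1})\times\cdots\times(\Omega_l,g_{\Omega_l})$. Your routing argument is valid, but it is extra work, and it forces you to re-derive the reducible-target case of Ng's proposition anyway inside each $\Omega_i$.

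For Step~3, the mechanism you sketch (closedness of $F^*\omega_{g'_\Omega}$, real-analytic continuation from slices) does not settle the orthogonality as stated: closedness only relates \emph{derivatives} of the mixed blocks to derivatives of the diagonal ones and does not by itself force the mixed blocks to vanish. The paper instead invokes \cite[Proof of Lemma~3.1]{Ng15} directly. If $e^{(i)}_\mu\in T_{Z^i}(D_i)$ and $e^{(j)}_\nu\in T_{Z^j}(D_j)$ are characteristic vectors with $i\neq j$, they are strongly orthogonal in $D$ (they lie in distinct direct factors of a maximal polydisk), and Ng's argument---which is indeed a bidisk analysis, run on the totally geodesic $\Delta\times\Delta$ they span using Lemma~\ref{lem:Pro2.3_Ng15}---gives $h(e^{(i)}_\mu,\overline{e^{(j)}_\nu})=0$ for $h=F^*g'_\Omega$. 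Since characteristic vectors span each factor via the normal form, the full mixed vanishing $h(v_i,\overline{v_j})=0$ follows by bilinearity. So your bidisk intuition is exactly right; the missing piece is simply that this orthogonality is already proved in \cite{Ng15} and can be cited rather than re-derived. Once the isometry $F^*g'_\Omega=\sum_j\pi_j^*g_{D_j}$ is in hand, the paper obtains total geodesy by citing \cite[Proof of Theorem~1.3.2]{Mok12}; this is essentially your Gauss-equation argument packaged as a reference.
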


In the consideration of proper holomorphic maps between bounded symmetric domains, we will deduce the following result from Theorem \ref{thm:TrIrr1}.

\begin{theorem}\label{thm:TrIrr2}
Let $F:D\to \Omega$ be a proper holomorphic map between bounded symmetric domains $D$ and $\Omega$ such that $\mathrm{rank}(D)=\mathrm{rank}(\Omega)$.
Suppose all irreducible factors of $D$ are of rank at least two.
Write $D=D_1\times\cdots \times D_k$ and $\Omega=\Omega_1\times\cdots \times \Omega_l$, where $D_j$, $1\le j\le k$, and $\Omega_j$, $1\le j\le l$, are the irreducible factors of $D$ and $\Omega$ respectively with $\mathrm{rank}(D_j)\ge 2$ for $1\le j\le k$.
Then, $F$ is a totally geodesic isometric embedding from $(D_1,g_{D_1})\times\cdots \times (D_k,g_{D_k})$ to $(\Omega_1,g_{\Omega_1})\times\cdots \times (\Omega_l,g_{\Omega_l})$.
\end{theorem}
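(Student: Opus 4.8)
The plan is to deduce Theorem \ref{thm:TrIrr2} from Theorem \ref{thm:TrIrr1} by verifying that the hypothesis of the latter holds automatically: I will show that a proper holomorphic map $F:D\to\Omega$ with $\mathrm{rank}(D)=\mathrm{rank}(\Omega)$, and with all irreducible factors of $D$ of rank $\ge 2$, necessarily maps the minimal disks of $D$ properly into rank-$1$ characteristic symmetric subspaces of $\Omega$. Granting this, Theorem \ref{thm:TrIrr1} applies directly and produces the asserted totally geodesic isometric embedding. As a first reduction I would note that, since every factor $D_j$ has rank $\ge 2$, each minimal disk of $D$ is a minimal disk of a single slice $D_j\times\{c\}$; equivalently, the variety of minimal rational tangents of $D$ is the disjoint union of those of the factors. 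It therefore suffices to control the image under $F$ of minimal disks contained in the individual factors.

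There are two things to establish for such a minimal disk $\ell:\Delta\to D_j\times\{c\}\subset D$: that its image lies in some rank-$1$ characteristic symmetric subspace $S$ of $\Omega$, and that $F|_\ell:\ell\to S$ is proper. The properness of $F$ controls the boundary behavior: the boundary circle $\ell(\partial\Delta)$ lies in $\partial D$ (the remaining coordinates being frozen at an interior point $c$), so $F(\ell(z))\to\partial\Omega$ as $z\to\partial\Delta$, and $F\circ\ell$ is a proper holomorphic disk in $\Omega$; refining this boundary analysis should give properness of $F|_\ell$ into the subspace $S$ once $S$ is identified. The constraint that the image lies in a rank-$1$ subspace is, on the other hand, exactly where the hypothesis $\mathrm{rank}(D)=\mathrm{rank}(\Omega)$ is indispensable: it is what forbids $F$ from spreading the rank-$1$ minimal disk across characteristic directions of total rank $>1$ in $\Omega$, and it is the point at which one reproduces, in the present reducible setting, Tsai's analysis in \cite{Ts93} of the action of a proper map on boundary components and on the characteristic bundle.

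I expect this rank constraint to be the main obstacle. Concretely, I would run a rank estimate on boundary components: a maximal boundary component of $D=D_1\times\cdots\times D_k$ is obtained by replacing one factor $D_j$ by its maximal boundary component and has rank $\mathrm{rank}(D)-1$; using the boundary regularity furnished by properness, each such component maps into a boundary component of $\Omega$ of rank at most $\mathrm{rank}(\Omega)-1$, and the equality $\mathrm{rank}(D)=\sum_j\mathrm{rank}(D_j)=\sum_i\mathrm{rank}(\Omega_i)=\mathrm{rank}(\Omega)$ forces it to map into a maximal boundary component with no rank defect. Iterating down through the rank stratification pins the image of a minimal disk inside a rank-$1$ characteristic symmetric subspace. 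The delicate part is to make this boundary/rank bookkeeping rigorous in the reducible case and to promote the crude statement $F(\ell(z))\to\partial\Omega$ to genuine properness of $F|_\ell$ into the identified rank-$1$ subspace; once this is achieved the hypothesis of Theorem \ref{thm:TrIrr1} is in force and the conclusion follows.
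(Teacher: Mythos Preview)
Your approach is correct and matches the paper's exactly: reduce to Theorem~\ref{thm:TrIrr1} by showing that $F$ sends minimal disks of $D$ properly into rank-$1$ characteristic symmetric subspaces of $\Omega$. The paper accomplishes this in one line by invoking Lemma~\ref{lem_Preserve_rk1CSS} (recorded there from Mok--Tsai \cite{MT92}, Tsai \cite{Ts93}, and Ng \cite{Ng15}), so the boundary-component rank bookkeeping you sketch is already packaged and need not be re-derived.
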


In \cite{Seo18}, Seo has introduced semi-product proper holomorphic maps between (reducible) bounded symmetric domains.
Then, Seo \cite{Seo18} has proven that any proper rational map between (reducible) bounded symmetric domains is a semi-product proper holomorphic map. One of the main results in Seo \cite{Seo18} is the classification of all proper holomorphic maps between (reducible) bounded symmetric domains of the same dimension (see \cite[Theorem 1.2]{Seo18}).
Motivated by the work of Seo \cite{Seo18}, we will study semi-product proper holomorphic maps between non-equidimensional (reducible) bounded symmetric domains. Under certain rank conditions, we are able to get the complete description for such maps (see Theorem \ref{thm:Structure_PHM1}).

\section{Preliminaries}
For a (reducible) bounded symmetric domain $D=D_1\times\cdots \times D_k$, where $D_j$, $1\le j\le k$, are the irreducible factors of $D$, there is a K\"ahler metric $g'_D$ on $D$ such that $(D,g'_D)\cong(D_1,\lambda_1g_{D_1})\times\cdots \times (D_k,\lambda_kg_{D_k})$ for some positive real constants $\lambda_j$, $1\le j\le k$, namely,
\[ g'_D = \sum_{j=1}^k \lambda_j \pi_j^*g_{D_j}, \]
where $\pi_j:D\to D_j$ is the canonical projection onto the $j$-th irreducible factor of $D$, $\pi_j(Z^1,\ldots,Z^k)=Z^j$ for $(Z^1,\ldots,Z^k)\in D_1\times\cdots \times D_k=D$, $1\le j\le k$.
In what follows, for any bounded symmetric domain $D$ we call such a metric $g'_D$ on $D$ a \textbf{canonical K\"ahler metric} and denote by $\mathrm{rank}(D)$ the rank of $D$.
It is well-known that a bounded symmetric domain $D$ is of rank $1$ if and only if $D$ is biholomorphic to a complex unit ball.

Denote by $\Delta^k=\{(z_1,\ldots,z_k)\in \mathbb C^k: |z_j|<1,\;1\le j\le k\}$ the $k$-disk in $\mathbb C^k$ for any integer $k\ge 1$.
We let $\mathbb B^n$ be the complex unit ball in the complex $n$-dimensional Euclidean space $\mathbb C^n$ with respect to the standard complex Euclidean metric, i.e.,
\[ \mathbb B^n:=\left\{(z_1,\ldots,z_n)\in \mathbb C^n: \sum_{j=1}^n |z_j|^2 < 1 \right\}.\]
For any complex manifold $M$, we denote by $T^{1,0}_x(M)=T_x(M)$ the holomorphic tangent space to $M$ at $x\in M$.

Let $D\cong G/K$ be a bounded symmetric domain in $\mathbb C^n$ and let $r:=\mathrm{rank}(D)$, where $G$ is the identity component of the automorphism group of $D$ and $K\subset G$ is the isotropy subgroup of $G$ at ${\bf 0}\in \mathbb C^n$. By the Polydisk Theorem (cf.\,\cite[p.\,88]{Mok89}, \cite{Wo72}), there is a totally geodesic complex submanifold $\Pi\cong \Delta^r$ of $D$ such that
\[ D=\bigcup_{k\in K} k\cdot \Pi. \]
A vector $v\in T_x(D)$, $x\in D$, is said to be a \textbf{characteristic vector} of $D$ at $x$ if $v$ is tangent to any direct factor of a totally geodesic $r$-disk of $D$ (cf.\,\cite[Section 2]{Ng15}).
Write $D=D_1\times\cdots \times D_k$, where $D_j$, $1\le j\le k$, are the irreducible factors of $D$.
Then, it follows from Wolf \cite{Wo72} that any rank-$1$ characteristic symmetric subspace of $D$ is of the form
$\{x_1\}\times\cdots \times \{x_{j-1}\} \times B_j \times \{x_{j+1}\}\times\cdots \times \{x_k\}$ for some $j$, $1\le j\le k$, where $B_j\cong \mathbb B^{m_j}$ is a rank-$1$ characteristic symmetric subspace of $D_j$, $x_\mu \in D_\mu$ is a point for each $\mu\neq j$, and $m_j$ is a positive integer depending on $D_j$.
Here, we also know that $(B_j,g_{D_j}|_{B_j})$ is holomorphically isometric to $(\mathbb B^{m_j},g_{\mathbb B^{m_j}})$, which is of constant holomorphic sectional curvature $-2$. For the notion of characteristic symmetric subspaces of bounded symmetric domains, we refer the readers to Mok-Tsai \cite{MT92}.

\section{Proper holomorphic maps between bounded symmetric domains of equal rank $\ge 2$}
Motivated by the study in Tsai \cite{Ts93} and Ng \cite{Ng15}, we are concerning proper holomorphic maps between (reducible) bounded symmetric domains of the same rank $\ge 2$.
In \cite{Ts93}, Tsai has proven that if $F:D\to \Omega$ is a proper holomorphic map between bounded symmetric domains $D$ and $\Omega$, then $\mathrm{rank}(D)\le \mathrm{rank}(\Omega)$.
Thus, it is natural to ask the following question.
\begin{Q}\label{Pro_RP1}
Let $F:D\to \Omega$ be a proper holomorphic map between bounded symmetric domains $D$ and $\Omega$.
If $\mathrm{rank}(D)=\mathrm{rank}(\Omega)\ge 2$, then is $F$ a totally geodesic holomorphic isometric embedding with respect to some canonical K\"ahler metrics on $D$ and $\Omega$?
\end{Q}
\begin{remark}
Tsai \cite[Main Theorem]{Ts93} has an affirmative answer to Question \ref{Pro_RP1} under the assumption that $D$ is irreducible.
\end{remark}

However, we have a negative answer to Question \ref{Pro_RP1} if $D$ is reducible and some irreducible factor of the domain $D$ is a complex unit ball, namely, we have
\begin{Example}\label{Eg:1}
We also denote by $M(p,q;\mathbb C)$ the space of $p$-by-$q$ complex matrices.
A type-$\mathrm{I}$ irreducible bounded symmetric domain is given by
\[ D^{\mathrm{I}}_{p,q}:=\left\{ Z \in M(p,q;\mathbb C): {\bf I}_q - \overline{Z}^t Z > 0 \right\}, \]
where $p$ and $q$ are positive integers. We refer the readers to Mok \cite{Mok89} for details about bounded symmetric domains.

For any integer $n\ge 2$, it is well-known that there is a positive integer $q$ and a proper holomorphic map $f:\mathbb B^n\to\mathbb B^{q-2}$ which is not a holomorphic isometry from $(\mathbb B^n,\lambda g_{\mathbb B^n})$ to $(\mathbb B^{q-2}, g_{\mathbb B^{q-2}})$ for any real constant $\lambda>0$ $($cf.\, D'Angelo \cite{D88}$)$.
More precisely, from D'Angelo \cite[p.\,84]{D88} we may let $q=2n+1$ and
\[ f(z_1,\ldots,z_n):=(z_1,\ldots,z_{n-1},z_1z_n,z_2z_n,\ldots,z_{n-1}z_n,z_n^2). \]
Writing $f=(f_1,\ldots,f_{q-2})$, we define a map $F:\mathbb B^n \times D^{\mathrm{I}}_{2,2} \to D^{\mathrm{I}}_{3,q}$ by
\[ F(z_1,\ldots,z_n;{\bf W})
=\begin{pmatrix}
f_1(z_1,\ldots,z_n) &\cdots & f_{q-2}(z_1,\ldots,z_n) & {\bf 0}\\
0 &\cdots & 0 & {\bf W}
\end{pmatrix} \]
for $(z_1,\ldots,z_n)\in \mathbb B^n$ and ${\bf W}\in D^{\mathrm{I}}_{2,2}$.
Then, $F$ is a proper holomorphic map between the bounded symmetric domains $\mathbb B^n \times D^{\mathrm{I}}_{2,2}$ and $D^{\mathrm{I}}_{3,q}$ of rank three such that $F$ is not a holomorphic isometry with respect to any canonical K\"ahler metrics of $\mathbb B^n \times D^{\mathrm{I}}_{2,2}$ and $D^{\mathrm{I}}_{3,q}$.
\end{Example}

\begin{remark}
\text{}
\begin{enumerate}
\item It is known from Chan-Xiao-Yuan \cite{CXY17} and Mok \cite{Mok12} that any holomorphic isometry between bounded symmetric domains with respect to the canonical K\"ahler metrics is a proper holomorphic map.
In \cite{Ch19}, we have shown that any holomorphic isometry between bounded symmetric domains of the same rank with respect to the canonical K\"ahler metrics is totally geodesic.
From Example \ref{Eg:1}, we know that this result from \cite{Ch19} cannot be generalized to the case of proper holomorphic maps unless we impose additional assumptions on the bounded symmetric domains.
\item Example \ref{Eg:1} shows that Theorems \ref{thm:TrIrr1} and \ref{thm:TrIrr2} cannot be generalized to the case where some irreducible factor of the domain $D$ is of rank $1$.
\end{enumerate}
\end{remark}

We first recall the following lemma obtained from Mok-Tsai \cite{MT92} and Tsai \cite{Ts93}, which is known by Ng \cite[p.\,224]{Ng15}.
\begin{lemma}[cf.\,Mok-Tsai \cite{MT92}, Tsai \cite{Ts93} and Ng \cite{Ng15}]\label{lem_Preserve_rk1CSS}
Let $F:D\to \Omega$ be a proper holomorphic map between bounded symmetric domains $D$ and $\Omega$.
Suppose $\mathrm{rank}(D)=\mathrm{rank}(\Omega) \ge 2$.
Then, $F$ maps rank-$1$ characteristic symmetric subspaces of $D$ properly into rank-$1$ characteristic symmetric subspaces of $\Omega$.
In particular, $F$ maps minimal disks of $D$ properly into rank-$1$ characteristic symmetric subspaces of $\Omega$.
\end{lemma}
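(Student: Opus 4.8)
The plan is to reduce the statement to a boundary analysis and then invoke the rank monotonicity of Tsai \cite{Ts93} together with the structure theory of characteristic symmetric subspaces of Mok--Tsai \cite{MT92}. First I would dispose of the final assertion: since every minimal disk of $D$ is, by the description recalled in Section 2, a totally geodesic disk of holomorphic sectional curvature $-2$ tangent to a characteristic vector and hence contained in some rank-$1$ characteristic symmetric subspace $B\cong \mathbb B^m$ of $D$, the ``in particular'' clause follows immediately once the main claim is established, with properness of the restricted map inherited from that of the restriction $F|_B$. So the entire content is to show that $F$ carries each rank-$1$ characteristic symmetric subspace $B$ of $D$ properly into some rank-$1$ characteristic symmetric subspace of $\Omega$.

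The key structural input I would use is the characterization, due to Mok--Tsai \cite{MT92}, of rank-$1$ characteristic symmetric subspaces of $\Omega$ as the totally geodesic complex submanifolds tangent to characteristic vectors, equivalently as the geodesic submanifolds realizing the maximal (least negative) holomorphic sectional curvature $-2$ of $(\Omega,g_\Omega)$. In terms of the boundary, recall from the Polydisk Theorem and the description of Wolf \cite{Wo72} that a rank-$1$ characteristic symmetric subspace $B$ of $D$ sits inside a maximal polydisk $\Pi\cong\Delta^r$, $r=\mathrm{rank}(D)$, as a single factor disk up to the $K$-action, so that the closure $\overline B$ meets $\partial D$ precisely along the \emph{maximal} (corank-$1$, i.e.\ rank-$(r-1)$) boundary components of $D$.

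The main step, then, is to control $F$ on the boundary, using that a proper holomorphic map between bounded symmetric domains extends to the boundary components and restricts there to proper holomorphic maps between them. A maximal boundary component $D_0$ of $D$ is a bounded symmetric domain of rank $r-1$, whereas every proper boundary component of $\Omega$ has rank at most $r-1$; applying Tsai's rank inequality \cite{Ts93} to the restriction $F|_{D_0}$ shows that $F$ maps $D_0$ into a boundary component of $\Omega$ of rank exactly $r-1$, that is, into a maximal boundary component of $\Omega$. Since $\overline B$ reaches $\partial D$ precisely along maximal boundary components and $F$ is proper, the image $F(B)$ is a holomorphic image of $\mathbb B^m$ whose closure meets $\partial\Omega$ along maximal boundary components; the extremal-curvature characterization of \cite{MT92} then pins $F(B)$ down to lie in a single rank-$1$ characteristic symmetric subspace of $\Omega$, into which $F|_B$ is proper by properness of $F$ and the boundary correspondence.

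I expect the main obstacle to be exactly this boundary analysis: establishing enough boundary regularity for $F$ on the (possibly reducible, possibly singular) strata of $\partial D$, and proving that equality of ranks forces the rank-$(r-1)$ boundary components to be preserved rather than collapsed into higher-corank strata. This is precisely where the combined weight of Mok--Tsai \cite{MT92} and Tsai \cite{Ts93} enters, which is why the lemma is stated as a recollection of their work rather than reproved from scratch; in particular, the passage from ``$F(B)$ touches $\partial\Omega$ along maximal components'' to ``$F(B)$ is contained in a rank-$1$ characteristic symmetric subspace'' relies on the extremal-curvature rigidity of \cite{MT92} rather than on any soft topological argument.
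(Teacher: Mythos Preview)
The paper does not give its own proof of this lemma: it is stated as a recollection of results from Mok--Tsai \cite{MT92}, Tsai \cite{Ts93}, and Ng \cite{Ng15}, and is used as a black box. So there is nothing in the paper to compare your argument against, and you correctly recognize this yourself in the final paragraph.

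Your outline is a faithful sketch of the route taken in those references: boundary regularity of proper maps, rank monotonicity of Tsai to force rank-$(r-1)$ boundary components to rank-$(r-1)$ boundary components, and then the Mok--Tsai characterization of characteristic directions to recover preservation of rank-$1$ characteristic symmetric subspaces. The one place I would tighten is your final implication. Knowing that $\overline{F(B)}$ meets $\partial\Omega$ only along maximal boundary components is not by itself enough to force $F(B)$ into a single rank-$1$ characteristic symmetric subspace; the ``extremal-curvature characterization'' of \cite{MT92} is a statement about tangent \emph{vectors}, not about submanifolds. What the boundary correspondence actually buys you (and what Tsai extracts) is that $dF$ carries characteristic vectors to characteristic vectors at every point. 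From there the conclusion follows because a rank-$1$ characteristic symmetric subspace is precisely the totally geodesic submanifold through a point whose tangent space is spanned by characteristic vectors lying in a common irreducible piece, so a holomorphic map preserving the characteristic-vector cone must carry each such $B$ into one of the target. With that adjustment your sketch matches the literature argument the paper is citing.
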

This yields the following obvious corollary.
\begin{corollary}
Let $F:D\to \Omega$ be a proper holomorphic map between bounded symmetric domains $D$ and $\Omega$ such that $\mathrm{rank}(D)=\mathrm{rank}(\Omega)$.
If $\Omega$ is of tube type, then so is $D$.
\end{corollary}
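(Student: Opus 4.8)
The plan is to detect tube type on the Bergman--Shilov boundary and to use that a proper holomorphic map carries this boundary to its counterpart. I would start from the classical characterization: for a bounded symmetric domain $U\Subset\mathbb C^N$ in its Harish--Chandra realization, $U$ is of tube type if and only if its Shilov boundary $S(U)$ is a totally real submanifold of $\partial U$ (equivalently $\dim_{\mathbb R}S(U)=N=\dim_{\mathbb C}U$), while if $U$ is not of tube type then $S(U)$ carries nonzero complex tangent directions. Because the Shilov boundary of a product is the product of the Shilov boundaries and tube type is a factorwise property, it is enough to work with $S(D)$ and $S(\Omega)$ directly, and in particular there is no need to match up irreducible factors; this is what lets the statement include rank-$1$ factors of $D$.

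The heart of the argument is a transfer of total reality. A proper holomorphic map $F:D\to\Omega$ extends to the closures so as to send $S(D)$ into $S(\Omega)$; moreover, being proper, $F$ has generically finite fibres, so $dF$ is injective at a generic point of $D$ and, by boundary regularity, at a generic point $p$ of $S(D)$. Assume now that $\Omega$ is of tube type, so that $S(\Omega)$ is totally real. If $S(D)$ had a complex tangent direction at such a $p$, that is, a vector $v\in T_pS(D)$ with $Jv\in T_pS(D)$ ($J$ the complex structure), then complex linearity of $dF$ would give $dF(v),\,J\,dF(v)=dF(Jv)\in T_{F(p)}S(\Omega)$ with $dF(v)\neq 0$, exhibiting a nonzero complex tangent direction of the totally real manifold $S(\Omega)$ --- a contradiction. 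By homogeneity of $S(D)$ under the isotropy group, it follows that $S(D)$ has no complex tangent directions anywhere, i.e. $S(D)$ is totally real, and hence $D$ is of tube type.

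This is consistent with Lemma \ref{lem_Preserve_rk1CSS}, which is the interior shadow of the same rigidity, recording that the characteristic (rank-$1$) structure of $D$ is carried into that of $\Omega$; for the irreducible factors of rank $\ge 2$ one may alternatively invoke Tsai \cite{Ts93} to realize such a factor of $D$ as an equal-rank totally geodesic holomorphic isometric subdomain of a factor of $\Omega$, whence, sharing a maximal polydisk, its restricted root system embeds into that of the target and the type-$C_r$ (tube) structure is inherited. I expect the main obstacle to lie in the boundary analysis rather than in the algebra: one must justify that $F$ extends with enough regularity to $S(D)$ and genuinely maps it into $S(\Omega)$ with injective differential at generic boundary points. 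It should be stressed that tube type is strictly finer than any dimension count on rank-$1$ characteristic symmetric subspaces --- a tube-type and a non-tube-type domain of the same rank can have rank-$1$ characteristic symmetric subspaces of exactly the same dimensions --- so the proof must transport the finer invariant, namely total reality of the Shilov boundary (equivalently the vanishing of the short-root multiplicity), which is precisely what the boundary formulation makes available.
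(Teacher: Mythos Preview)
Your final paragraph contains a factual error that undercuts the whole strategy. It is \emph{not} true that ``a tube-type and a non-tube-type domain of the same rank can have rank-$1$ characteristic symmetric subspaces of exactly the same dimensions.'' For an irreducible bounded symmetric domain, the rank-$1$ characteristic symmetric subspaces are complex unit balls $\mathbb B^{1+b}$, where $b$ is the multiplicity of the long (non-compact) restricted roots, and tube type is characterized precisely by $b=0$ (see Wolf \cite{Wo72}, Mok--Tsai \cite{MT92}). Thus $\Omega$ is of tube type if and only if its rank-$1$ characteristic symmetric subspaces are exactly the minimal disks. For a reducible domain this holds factorwise, so the equivalence persists. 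This is not a coarser invariant than total reality of the Shilov boundary; it is the same invariant, read off interior geometry.

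Once you accept this, the paper's proof is immediate and avoids all of the boundary analysis you flag as problematic. By Lemma \ref{lem_Preserve_rk1CSS} (valid when $\mathrm{rank}(D)=\mathrm{rank}(\Omega)\ge 2$), $F$ maps each rank-$1$ characteristic symmetric subspace of $D$ properly into a rank-$1$ characteristic symmetric subspace of $\Omega$; since the latter are unit disks when $\Omega$ is of tube type, and there is no proper holomorphic map $\mathbb B^m\to\Delta$ for $m\ge 2$, the rank-$1$ characteristic symmetric subspaces of $D$ must themselves be disks, hence every irreducible factor of $D$ is of tube type. The rank-$1$ case is handled separately and trivially. No extension of $F$ to the closure, no regularity of $F$ at $S(D)$, and no injectivity of $dF$ on boundary tangent spaces is needed.

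Your Shilov-boundary route is conceptually reasonable and, with enough work (Henkin--Tumanov/Mok--Tsai type boundary extension for proper maps between bounded symmetric domains, plus control of $dF$ at generic Shilov points), could likely be pushed through; but as written it is a sketch with the hard steps deferred, and it is motivated by a mistaken belief that the direct interior argument is unavailable.
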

\begin{proof}
Suppose $\mathrm{rank}(D)=\mathrm{rank}(\Omega)=1$. Then, $D$ (resp.\,$\Omega$) is biholomorphic to a complex unit ball. Since $\Omega$ is of tube type, $\Omega$ is the complex unit disk and thus $D$ can only be the complex unit disk as well. In particular, $D$ is of tube type.

Now, we suppose $\mathrm{rank}(D)=\mathrm{rank}(\Omega)\ge 2$.
Note that rank-$1$ characteristic symmetric subspaces of $\Omega$ are precisely the minimal disks of $\Omega$ because $\Omega$ is of tube type (cf.\,Mok-Tsai \cite{MT92} and Wolf \cite{Wo72}).
By Lemma \ref{lem_Preserve_rk1CSS}, $F$ maps rank-$1$ characteristic symmetric subspaces of $D$ properly into rank-$1$ characteristic symmetric subspaces of $\Omega$.
Therefore, rank-$1$ characteristic symmetric subspaces of $D$ could only be unit disks.
Hence, all irreducible factors of $D$ are of tube type and so is $D$ by Wolf \cite{Wo72}.
\end{proof}

We observe that \cite[Proposition 1.2]{Ng15} actually holds by \cite[Proof of Proposition 1.2]{Ng15} even when the target bounded symmetric domain is reducible, namely, we have

\begin{proposition}[cf.\,Proposition 1.2 in Ng \cite{Ng15}]\label{Pro:Pro1.2_Ng15}
Let $D$ and $\Omega$ be bounded symmetric domains of rank $\ge 2$ and let $F:D\to \Omega$ be a holomorphic map.
Suppose $D$ is irreducible and $F$ maps the minimal disks of $D$ properly into the rank-$1$ characteristic symmetric subspaces of $\Omega$.
Write $\Omega=\Omega_1\times\cdots \times \Omega_l$, where $\Omega_j$, $1\le j\le l$, are the irreducible factors of $\Omega$.
Then, $F$ is a totally geodesic isometric embedding from $(D,g_D)$ to $(\Omega_1,g_{\Omega_1})\times\cdots \times (\Omega_l,g_{\Omega_l})$.
\end{proposition}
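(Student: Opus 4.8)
The plan is to reproduce the argument Ng uses for his Proposition 1.2 in \cite{Ng15} and to check that the irreducibility of the target $\Omega$ is never actually exploited, the only structural input about $\Omega$ being the shape of its rank-$1$ characteristic symmetric subspaces. By the Polydisk Theorem together with Wolf's description \cite{Wo72} recalled in the Preliminaries, every rank-$1$ characteristic symmetric subspace of the reducible domain $\Omega=\Omega_1\times\cdots\times\Omega_l$ is contained in a single irreducible factor $\Omega_j$, is itself a rank-$1$ characteristic symmetric subspace $B\cong\mathbb B^{m_j}$ of that factor, and carries the metric $g_{\Omega_j}|_B$ of constant holomorphic sectional curvature $-2$. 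Hence, writing $F^*\omega_{g_\Omega}=\sum_{j=1}^l(\pi_j\circ F)^*\omega_{g_{\Omega_j}}$, the infinitesimal picture seen by $F$ along a minimal disk is, factor by factor, identical to the one in the irreducible case: the restriction of $F^*\omega_{g_\Omega}$ to any minimal disk is the pullback of the metric of a single such $B$ sitting inside one factor.

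First I would carry out the curvature comparison along minimal disks. Fix a minimal disk $\ell$ of $D$; it is a totally geodesic holomorphic disk of $(D,g_D)$ of constant Gaussian curvature $-2$, and by hypothesis $F$ maps $\ell$ properly, hence non-constantly, into some $B\cong\mathbb B^{m_j}\subset\Omega_j$. Since $B$ is totally geodesic in $\Omega$ of constant holomorphic sectional curvature $-2$, the Gauss equation shows that the semi-positive form $(F|_\ell)^*\omega_{g_\Omega}$ has Gaussian curvature $\le -2$ wherever it is positive. Writing $(F|_\ell)^*\omega_{g_\Omega}=u\,\omega_{g_D}|_\ell$ with $u\ge 0$ and comparing with the complete metric $g_D|_\ell$ of curvature $-2$ through the Ahlfors--Schwarz lemma, I obtain $u\le 1$ on $\ell$. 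Thus
\[ (F^*\omega_{g_\Omega})(v,\bar v)\le \omega_{g_D}(v,\bar v) \]
for every characteristic vector $v$ at every point of $D$, and the equality case of Ahlfors--Schwarz together with the Gauss equation shows that equality at one point forces $u\equiv 1$ along $\ell$ and the vanishing of the second fundamental form of $F|_\ell$.

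The remaining and main step is to upgrade this disk-wise inequality to a global equality, and this is where I expect the real work to lie. For a single minimal disk, properness into $B$ does not by itself force $u\equiv 1$, so the rigidity must come from the global structure: because $D$ is irreducible of rank $\ge 2$, the characteristic vectors at a point sweep out a connected subvariety of the projectivized tangent space that linearly spans $T_x(D)$, and the minimal disks form a connected family under the isotropy action. Following Ng \cite{Ng15}, one shows that the contraction factor $u$ cannot be strict on the connected spanning family without contradicting the way neighbouring minimal disks are mapped, so $u\equiv 1$ simultaneously in all characteristic directions; since these directions span, this upgrades to $F^*\omega_{g_\Omega}=\omega_{g_D}$, and the accompanying vanishing of the second fundamental form along every minimal disk propagates through the spanning family to give that $F$ is totally geodesic. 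The precise point that keeps the reducible case no harder than Ng's irreducible case is exactly Wolf's decomposition from the first paragraph: because each rank-$1$ characteristic symmetric subspace of $\Omega$ lives in one factor and is again a rank-$1$ characteristic symmetric subspace of curvature $-2$ there, both the equality-case analysis and the spanning argument apply verbatim, factor by factor, so no new phenomenon is introduced by the reducibility of $\Omega$. Combining this with the matched normalization yields that $F$ is a totally geodesic isometric embedding from $(D,g_D)$ into $(\Omega_1,g_{\Omega_1})\times\cdots\times(\Omega_l,g_{\Omega_l})$.
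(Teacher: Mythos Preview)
Your proposal is correct and takes essentially the same approach as the paper: both observe that Ng's proof of his Proposition~1.2 goes through verbatim for reducible $\Omega$ because the only structural input about the target is Wolf's description of rank-$1$ characteristic symmetric subspaces (each lies in a single irreducible factor and has curvature $-2$ there). The paper is in fact terser than you are---it simply cites Ng's proof without re-sketching the Ahlfors--Schwarz and spanning arguments---and only adds the remark that Ng's argument a priori yields $F^*\omega_{g'_\Omega}=\lambda\,\omega_{g_D}$ for some $\lambda>0$, with $\lambda=1$ then deduced separately from total geodesy, whereas your matched-curvature normalization recovers $\lambda=1$ directly.
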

\begin{remark}
From the proof of Proposition 1.2 in Ng \cite{Ng15}, we know that $F$ is a totally geodesic isometric embedding from $(D,\lambda g_D)$ to $(\Omega_1,g_{\Omega_1})\times\cdots \times (\Omega_l,g_{\Omega_l})$ for some positive real constant $\lambda$.
But then by the fact that $F$ maps minimal disks of $D$ properly into the rank-$1$ characteristic symmetric subspaces of $\Omega$ and $F$ is totally geodesic, we can deduce that $\lambda=1$.
\end{remark}

By making use of Proposition \ref{Pro:Pro1.2_Ng15} and results in Ng \cite{Ng15}, we are ready to prove Theorem \ref{thm:TrIrr1}, as follows.

\begin{proof}[Proof of Theorem \ref{thm:TrIrr1}]
We write $Z^j=(Z^j_1,\ldots,Z^j_{n_j})\in D_j \Subset \mathbb C^{n_j}$ for the Harish-Chandra coordinates of $D_j$, $1\le j\le k$.
For $W = (W^1,\ldots,W^k)\in D$, we let 
$\iota_{j,W}:D_j\hookrightarrow D$ be the natural embedding given by 
\[ \iota_{j,W}(Z^j):=(W^1,\ldots,W^{j-1},Z^j,W^{j+1},\ldots,W^k)\]
for $Z^j \in D_j$, $1\le j\le k$.
Then, each $F\circ\iota_{j,W}:D_j\to \Omega$ is a holomorphic map which maps the minimal disks of $D_j$ properly into rank-$1$ characteristic symmetric subspaces of $\Omega$, $1\le j\le k$.
Since $D_j$ is an irreducible bounded symmetric domain of rank $\ge 2$, it follows from Proposition \ref{Pro:Pro1.2_Ng15} that $F\circ\iota_{j,W}$ is a totally geodesic holomorphic isometric embedding from $(D_j,g_{D_j})$ to $(\Omega_1,g_{\Omega_1})\times\cdots \times (\Omega_l,g_{\Omega_l})$, $1\le j\le k$.
Let $g'_\Omega$ be the canonical K\"ahler metric on $\Omega$ such that $(\Omega,g'_\Omega)\cong (\Omega_1,g_{\Omega_1})\times\cdots \times (\Omega_l,g_{\Omega_l})$.
Therefore, for any $W\in D$ we have $\iota_{j,W}^*(F^*\omega_{g'_\Omega})=\omega_{g_{D_j}}$ for $1\le j\le k$.

Write $h:=F^*g'_\Omega$ and $\omega_h:=F^*\omega_{g'_\Omega}$.
For $1\le j\le k$, let $\pi_j:D\to D_j$ be the canonical projection onto the $j$-th factor, i.e., $\pi_j(W^1,\ldots,W^k)=W^j$ for $(W^1,\ldots,W^k)\in D$.
Let $Z=(Z^1,\ldots,Z^k)\in D$.
Note that $T_Z(D)=T_{Z^1}(D_1)\oplus \cdots \oplus T_{Z^k}(D_k)$.
For any $v\in T_Z(D)$, we write $v = v_1+\ldots+v_k$, where $v_j\in T_{Z^j}(D_j)$ for $1\le j\le k$.
Furthermore, for $1\le j\le k$ and for any tangent vector $v_j\in T_{Z^j}(D_j)$ we may write $v_j=\sum_{\mu=1}^{r_j} v_{j,\mu} e^{(j)}_\mu$ in normal form (cf.\,Mok \cite[p.\,252]{Mok89}), where $r_j:=\mathrm{rank}(D_j) \ge 2$ and $\{e^{(j)}_\mu\}_{\mu=1}^{r_j}$ is the standard basis for the holomorphic tangent space of a totally geodesic $r_j$-disk of $D_j$ through the point $Z^j\in D_j$. In this situation, $e^{(j)}_\mu$, $1\le \mu\le r_j$, are characteristic vectors of $T_{Z^j}(D_j)$, for $1\le j\le k$.
From \cite[Proof of Lemma 3.1]{Ng15} we have
\[ h\big(e^{(i)}_\mu,\overline{e^{(j)}_\nu}\big) = 0 \]
for distinct $i,j$, $1\le i,j\le k$, and for any $\mu,\nu$, $1\le \mu\le r_i$, $1\le \nu \le r_j$. In general, letting $\alpha_\mu \in T_{Z^\mu}(D_\mu) \subset T_Z(D)$ be characteristic vectors, $1\le \mu\le k$, we have $h(\alpha_i,\overline{\alpha_j}) = 0$ for distinct $i,j$, $1\le i,j\le k$.
This implies that for tangent vectors $v_\mu\in T_{Z^\mu}(D_\mu) \subset T_Z(D)$, $1\le \mu\le k$, we have $h(v_i,\overline{v_j})=0$ for distinct $i,j$, $1\le i,j\le k$.
In particular, we have
\[ \omega_h = \sqrt{-1}\sum_{j=1}^k \sum_{1\le \mu,\nu\le n_j} h^{(j)}_{\mu\overline \nu}(Z) dZ^j_\mu \wedge d\overline{Z^j_\nu} \]
on $D$.
Recall that for $W\in D$ we have $\iota_{i,W}^*\omega_h = \omega_{g_{D_i}}$ for $1\le i\le k$. Thus, for $1\le j\le k$, each $h^{(j)}_{\mu\overline \nu}(Z)$, $1\le \mu,\nu\le n_j$, only depends on $Z^j$, i.e., $h^{(j)}_{\mu\overline \nu}(Z)\equiv h^{(j)}_{\mu\overline \nu}(Z^j)$.
In addition, for $1\le j\le k$ we have 
\[ \sqrt{-1}\sum_{1\le \mu,\nu\le n_j} h^{(j)}_{\mu\overline \nu}(Z^j) dZ^j_\mu \wedge d\overline{Z^j_\nu}=\pi_j^*\omega_{g_{D_j}}\]
by $\pi_j^*\omega_{g_{D_j}}=\pi_j^*(\iota_{j,W}^*\omega_h)=(\iota_{j,W}\circ \pi_j)^*\omega_h$.
Then, we have $\omega_h = \sum_{j=1}^k \pi_j^* \omega_{g_{D_j}}$ and thus $F^*g'_\Omega=h=\sum_{j=1}^k \pi_j^* g_{D_j}$.
Hence, $F$ is a (proper) holomorphic isometric embedding from $(D,\sum_{j=1}^k \pi_j^* g_{D_j})$ to $(\Omega,g'_\Omega)$.
Since the irreducible factors of $D$ are of rank $\ge 2$, it follows from the arguments of \cite[Proof of Theorem 1.3.2]{Mok12} that the second fundamental form of $(F(D),g'_\Omega|_{F(D)})$ in $(\Omega,g'_\Omega)$ vanishes identically and thus $F$ is totally geodesic, as desired.
\end{proof}

As a consequence, we have a simple proof of Theorem \ref{thm:TrIrr2} in the following.
(Noting that Theorem \ref{thm:TrIrr2} actually provides an affirmative answer to Question \ref{Pro_RP1} under the assumption that all irreducible factors of the domain $D$ are of rank $\ge 2$.)

\begin{proof}[Proof of Theorem \ref{thm:TrIrr2}]
By Lemma \ref{lem_Preserve_rk1CSS}, $F$ maps minimal disks of $D$ properly into rank-$1$ characteristic symmetric subspaces of $\Omega$.
Then, the result follows from Theorem \ref{thm:TrIrr1}.
\end{proof}

Now, we study holomorphic maps $f:D\to \Omega$ which map minimal disks of $D$ properly into rank-$1$ characteristic symmetric subspaces of $\Omega$, where $D$ and $\Omega$ are bounded symmetric domains such that $\Omega$ is reducible. The case where the reducible bounded symmetric domain $\Omega$ has an irreducible factor of rank $\ge 2$ can be quite complicated in general if some irreducible factors of the domain $D$ are complex unit balls (See Example \ref{Eg:1}). Therefore, we will focus on the simplest case where the target $\Omega$ is a product of complex unit balls.
We first recall a result of Ng \cite{Ng15}.
\begin{lemma}[cf.\,Proposition 2.3 in \cite{Ng15}]\label{lem:Pro2.3_Ng15}
Let $F:\Delta \times U \to \mathbb B^n$ be a holomorphic map such that $F|_{\Delta\times\{{\bf 0}\}}: \Delta \cong \Delta\times\{{\bf 0}\} \to \mathbb B^k$ is a proper map, where $U\Subset \mathbb C^m$ is a bounded domain containing ${\bf 0}$. Then, for any $(z,w)\in \Delta\times U$ we have $F(z,w)=F(z,{\bf 0})$.
\end{lemma}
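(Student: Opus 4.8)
The plan is to reduce the statement to a comparison of boundary values on $\partial\Delta$ and to exploit the maximum principle in the transverse variable $w$. Write $g:=F(\,\cdot\,,{\bf 0})$ and $F=(F_1,\dots,F_n)$. For each fixed $w\in U$ the components $F_i(\,\cdot\,,w)$ are bounded holomorphic functions on $\Delta$ (since $\|F(\,\cdot\,,w)\|<1$), hence lie in $H^2(\Delta)$ and admit radial boundary values $F_i(e^{i\theta},w)$ for a.e.\ $\theta$, with $\|F(e^{i\theta},w)\|\le 1$ a.e. The properness of $g$ forces $\|g(z)\|\to 1$ as $z\to\partial\Delta$, so that $\|g(e^{i\theta})\|=1$ for a.e.\ $\theta$. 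These two facts are the only places where the hypotheses enter.

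The key device will be the auxiliary function
\[ J(w):=\frac{1}{2\pi}\int_0^{2\pi}\big\langle F(e^{i\theta},w),\,g(e^{i\theta})\big\rangle\,d\theta,\qquad \langle a,b\rangle:=\sum_{i}a_i\overline{b_i}. \]
First I would check that $J$ is holomorphic in $w$: expanding $F_i(z,w)=\sum_k a_{i,k}(w)z^k$ and $g_i(z)=\sum_k b_{i,k}z^k$, the integral collapses to the $H^2$-pairing $J(w)=\sum_{i,k}a_{i,k}(w)\,\overline{b_{i,k}}$, a series of holomorphic functions that converges normally on $U$ by Cauchy--Schwarz (since $\sum_{i,k}|a_{i,k}(w)|^2\le 1$ and $\sum_{i,k}|b_{i,k}|^2=1$), hence is holomorphic. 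Next, pointwise Cauchy--Schwarz together with $\|F(e^{i\theta},w)\|\le 1$ and $\|g(e^{i\theta})\|=1$ gives $|J(w)|\le 1$ on $U$, while $J({\bf 0})=\frac1{2\pi}\int\|g(e^{i\theta})\|^2\,d\theta=1$. Since $U$ is a domain and $|J|$ attains its maximum $1$ at the interior point ${\bf 0}$, the maximum modulus principle yields $J\equiv 1$ on $U$.

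Finally I would convert $J\equiv 1$ into an equality of boundary values. For each $w$, taking real parts in $J(w)=1$ and using that $\mathrm{Re}\langle F(e^{i\theta},w),g(e^{i\theta})\rangle\le\|F(e^{i\theta},w)\|\,\|g(e^{i\theta})\|\le 1$ a.e., the average being $1$ forces $\mathrm{Re}\langle F(e^{i\theta},w),g(e^{i\theta})\rangle=1$ a.e. Then
\[ \|F(e^{i\theta},w)-g(e^{i\theta})\|^2=\|F(e^{i\theta},w)\|^2-2\,\mathrm{Re}\langle F,g\rangle+\|g(e^{i\theta})\|^2\le 1-2+1=0 \]
a.e., so $F(e^{i\theta},w)=g(e^{i\theta})$ for a.e.\ $\theta$. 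As $F(\,\cdot\,,w)$ and $g$ are both in $H^2(\Delta,\mathbb C^n)$ and share the same boundary values, $H^2$-uniqueness gives $F(z,w)=g(z)=F(z,{\bf 0})$ for all $z\in\Delta$, which is the claim.

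I expect the main obstacle to be technical rather than conceptual: the conjugation in the Hermitian pairing makes $\langle F(z,w),g(z)\rangle$ non-holomorphic in $z$, which is precisely why one must integrate over $\partial\Delta$ to manufacture a quantity that is holomorphic in $w$. Care will be needed to justify (i) the existence and norm estimates of the radial boundary values, (ii) the interchange of sum and integral yielding the holomorphy of $J$, and (iii) the passage from equal a.e.\ boundary values back to equality on $\Delta$; each of these is standard Hardy-space theory once the set-up is in place.
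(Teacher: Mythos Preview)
The paper does not supply its own proof of this lemma; it is simply quoted from \cite{Ng15} (Proposition~2.3 there), so there is no in-paper argument to compare your attempt against.

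On its own merits your argument is correct. The $H^2$-pairing device --- averaging $\langle F(e^{i\theta},w),g(e^{i\theta})\rangle$ over the circle to manufacture a function $J(w)$ that is holomorphic in $w$ alone, then applying the maximum principle on the connected domain $U$ --- is a clean way to handle the anti-holomorphic factor $\overline{g}$, and your chain of inequalities in Step~7 is tight. The three technical points you flag are all routine: radial limits and the bound $\|F(e^{i\theta},w)\|\le 1$ follow from Fatou's theorem for bounded holomorphic functions; $H^2$-uniqueness from a.e.\ boundary values is standard; and for the holomorphy of $J$, one small sharpening is worth recording --- the tail of the series $\sum_{i,k}a_{i,k}(w)\overline{b_{i,k}}$ is dominated, via Cauchy--Schwarz, by $\bigl(\sum_{(i,k)\notin S}|b_{i,k}|^2\bigr)^{1/2}$, a bound that is \emph{independent of $w$}, which gives the uniform (not merely pointwise) convergence you need to conclude that $J$ is holomorphic.
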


On the other hand, by Mok \cite{Mok16} and Yuan-Zhang \cite{YZ12}, we observe the non-existence of holomorphic isometries between certain bounded symmetric domains with respect to the canonical K\"ahler metrics, as follows.
\begin{proposition}\label{Pro:NonEx_HI_BSD_to_PBs}
Let $\Omega\Subset \mathbb C^N$ be a bounded symmetric domain such that $\Omega$ has an irreducible factor of rank $\ge 2$, i.e., $\Omega=\Omega_1\times\cdots \times \Omega_n$ and there exists $j$, $1\le j\le n$, such that $\mathrm{rank}(\Omega_j)\ge 2$, where $\Omega_i$, $1\le i\le n$, are the irreducible factors of $\Omega$.
Equip a K\"ahler metric $g'_\Omega$ on $\Omega$ so that
$(\Omega,g'_\Omega)\cong (\Omega_1,\lambda_1g_{\Omega_1})\times\cdots \times (\Omega_n,\lambda_ng_{\Omega_n})$ for some positive real constants $\lambda_j$, $1\le j\le n$.
Then, there does not exist a holomorphic isometry from $(\Omega,g'_\Omega)$ to $(\mathbb B^{N_1},\mu_1 g_{\mathbb B^{N_1}})\times\cdots \times (\mathbb B^{N_m},\mu_m g_{\mathbb B^{N_m}})$, where $\mu_l$, $1\le l\le m$, are positive real constants.
\end{proposition}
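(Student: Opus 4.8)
The plan is to argue by contradiction, reducing first to the irreducible factor of rank $\ge 2$. Suppose a holomorphic isometry $F\colon (\Omega,g'_\Omega)\to (\mathbb B^{N_1},\mu_1 g_{\mathbb B^{N_1}})\times\cdots\times(\mathbb B^{N_m},\mu_m g_{\mathbb B^{N_m}})$ exists, and pick an irreducible factor $\Omega_j$ of $\Omega$ with $\mathrm{rank}(\Omega_j)\ge 2$. Fixing a base point in each of the remaining factors, the slice inclusion $\iota\colon \Omega_j\hookrightarrow\Omega$ satisfies $\iota^*g'_\Omega=\lambda_j g_{\Omega_j}$, so that $\widetilde F:=F\circ\iota$ is a holomorphic isometry from $(\Omega_j,\lambda_j g_{\Omega_j})$ into the product of balls, which I denote by $N$. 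Since every holomorphic isometry is an immersion, it suffices to show that no such $\widetilde F$ exists once $\Omega_j$ is irreducible of rank $\ge 2$.

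I would then exploit the holomorphic bisectional curvature. Writing $A_l:=P_l\circ(d\widetilde F)_{\mathbf 0}$, where $P_l$ is the projection onto the tangent space of the $l$-th ball, the bisectional curvature of $N$ on a pair $\widetilde F_* u,\widetilde F_* v$ equals $\sum_l R^{(l)}(A_l u,\overline{A_l u},A_l v,\overline{A_l v})$, where $R^{(l)}$ is the curvature tensor of $(\mathbb B^{N_l},\mu_l g_{\mathbb B^{N_l}})$; each summand is $\le 0$, and it vanishes exactly when $A_l u=0$ or $A_l v=0$, because a single ball has strictly negative bisectional curvature on nonzero vectors. On $\Omega_j$, the Polydisk Theorem furnishes, at $\mathbf 0$, two characteristic vectors $e_1,e_2$ tangent to distinct factors of a maximal polydisk $\Pi\cong\Delta^r$; as $\Pi$ is totally geodesic and is a Riemannian product of disks, $R^{\Omega_j}(e_1,\overline{e_1},e_2,\overline{e_2})=0$. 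The Gauss equation for the holomorphic isometric immersion $\widetilde F$ then reads $R^{N}(\widetilde F_* e_1,\overline{\widetilde F_* e_1},\widetilde F_* e_2,\overline{\widetilde F_* e_2})=R^{\Omega_j}(e_1,\overline{e_1},e_2,\overline{e_2})+\|\sigma(e_1,e_2)\|^2=\|\sigma(e_1,e_2)\|^2\ge 0$, with $\sigma$ the second fundamental form. As the left-hand side is $\le 0$, it vanishes, forcing $\widetilde F_* e_1$ and $\widetilde F_* e_2$ to have disjoint support among the factors: for every $l$, at least one of $A_l e_1, A_l e_2$ is zero.

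To convert this into a contradiction I would use a genericity argument. Because $(d\widetilde F)_{\mathbf 0}$ is injective, the set $S:=\{l: A_l\neq 0\}$ is nonempty, and a characteristic vector $e$ lying outside the proper subspaces $\ker A_l$ ($l\in S$) has full support $\{l: A_l e\neq0\}=S$; such $e$ form a dense open subset of the variety $\mathcal C_{\mathbf 0}$ of characteristic vectors. Since the isotropy group acts transitively on $\mathcal C_{\mathbf 0}$ and preserves strong orthogonality, and since for $\mathrm{rank}(\Omega_j)\ge 2$ the strongly orthogonal pairs of characteristic vectors form a nonempty family projecting onto $\mathcal C_{\mathbf 0}$ with dense image under either coordinate projection, I can choose a strongly orthogonal pair $(e_1,e_2)$ with both $e_1$ and $e_2$ of full support $S$. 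Disjointness of their supports then gives $S=\emptyset$, contradicting that $\widetilde F$ is an immersion. Hence no isometry $\widetilde F$, and a fortiori no $F$, can exist.

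The main obstacle is this last step, that is, producing a strongly orthogonal pair of characteristic vectors simultaneously in general position with respect to $(d\widetilde F)_{\mathbf 0}$. For a single ball ($m=1$) the difficulty disappears, since there the bisectional curvature is strictly negative and the Gauss equation already yields $\|\sigma(e_1,e_2)\|^2<0$, an outright contradiction; the subtlety in the product case is precisely that the bisectional curvature of $N$ can vanish along $\widetilde F_* e_1,\widetilde F_* e_2$. Controlling how $(d\widetilde F)_{\mathbf 0}$ distributes the characteristic directions of the rank-$\ge 2$ domain among the ball factors, and thereby excluding the disjoint-support configuration, is exactly where the structure theory of strongly orthogonal frames (Mok--Tsai \cite{MT92}, Wolf \cite{Wo72}) together with the rigidity of holomorphic isometries into products of balls from Mok \cite{Mok16} and Yuan--Zhang \cite{YZ12} are brought to bear.
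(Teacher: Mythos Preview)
Your reduction to a single irreducible factor $\Omega_j$ of rank $\ge 2$ matches the paper, but from there the arguments diverge completely. The paper's proof is a three-line composition trick: by \cite{Mok16} there exists a \emph{nonstandard} (not totally geodesic) holomorphic isometry $F:(\mathbb B^k,g_{\mathbb B^k})\to(\Omega_j,g_{\Omega_j})$ for some $k\ge 2$; composing with the hypothetical $\hat f$ yields a holomorphic isometry $\hat f\circ F$ from $\mathbb B^k$ into a product of balls, which by the rigidity theorem of \cite{YZ12} must be totally geodesic; since $\hat f$ is an isometric embedding this forces $F$ itself to be totally geodesic, contradicting the choice of $F$. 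No curvature computation enters.

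Your curvature/Gauss-equation route is a genuinely different approach and is correct through the disjoint-support conclusion for a fixed strongly orthogonal pair $(e_1,e_2)$. The genericity step, however, is not justified by what you wrote: transitivity of $K$ on $\mathcal C_{\mathbf 0}$ alone does not let you choose \emph{both} members of a strongly orthogonal pair in a prescribed dense open set. What you need is that the manifold of ordered strongly orthogonal pairs of unit characteristic vectors is connected (for instance because $K$ acts transitively on such pairs, which holds by restricted-root theory, cf.\ \cite{Wo72}); then for each $l\in S$ the conditions $A_le_1\neq 0$ and $A_le_2\neq 0$ cut out dense open subsets of that connected manifold and hence intersect. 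You assert the conclusion without supplying this ingredient.

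The more serious problem is your closing paragraph. The references \cite{Mok16} and \cite{YZ12} are not ``brought to bear'' on your support-distribution problem; neither says anything about strongly orthogonal frames, and your curvature argument, once the connectedness gap is filled, requires only \cite{MT92} and \cite{Wo72}. In the paper those two references constitute the \emph{entire} proof via the composition above. So either complete the genericity step on its own terms (connectedness of the pair variety $\Rightarrow$ intersection of dense opens), or replace the curvature argument by the paper's composition trick --- but do not cite \cite{Mok16} and \cite{YZ12} as a stand-in for the missing step, since that misrepresents what those papers actually provide.
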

\begin{proof}
Assume the contrary that there exists a holomorphic isometry $f$ from $(\Omega,g'_\Omega)$ to $(\mathbb B^{N_1},\mu_1 g_{\mathbb B^{N_1}})$ $\times$ $\cdots$ $\times$ $(\mathbb B^{N_m},\mu_m g_{\mathbb B^{N_m}})$, where $\mu_l$, $1\le l\le m$, are positive real constants.
Then, by restricting to the irreducible factor $\Omega_j$ of $\Omega$, we have a holomorphic isometry $\hat f$ from $(\Omega_j,g_{\Omega_j})$ to $(\mathbb B^{N_1},\mu'_1 g_{\mathbb B^{N_1}})\times\cdots \times (\mathbb B^{N_m},\mu'_m g_{\mathbb B^{N_m}})$, where $\mu'_l:={\mu_l\over \lambda_j}$ for $1\le l\le m$.
Write $\Omega_j$ for an irreducible factor of $\Omega$ such that $\mathrm{rank}(\Omega_j)\ge 2$.
Then, it follows from \cite{Mok16} that there exists a nonstandard (i.e., not totally geodesic) holomorphic isometry $F$ from $(\mathbb B^k,g_{\mathbb B^k})$ to $(\Omega_j,g_{\Omega_j})$ for some integer $k\ge 2$.
This gives a holomorphic isometry $\hat f\circ F$ from $(\mathbb B^k,g_{\mathbb B^k})$ to $(\mathbb B^{N_1},\mu'_1 g_{\mathbb B^{N_1}})\times\cdots \times (\mathbb B^{N_m},\mu'_m g_{\mathbb B^{N_m}})$.
By the rigidity theorem of Yuan-Zhang \cite{YZ12}, $\hat f\circ F$ is totally geodesic.
This contradicts with the fact that $F$ is not totally geodesic.
Hence, there does not exist such a holomorphic isometry $f$, as desired.
\end{proof}

Now, by making use of the technique in Ng \cite{Ng15}, we have the following structure theorem for holomorphic maps from a bounded symmetric domain $D$ to a product $\Omega$ of complex unit balls which map minimal disks of $D$ properly into rank-$1$ characteristic symmetric subspaces of $\Omega$.
\begin{theorem}\label{thm:H_map_MD_to_rk1CSS_1}
Let $D=D_1\times\cdots \times D_k$ be a bounded symmetric domain of rank $\ge 2$ and $\Omega:=\mathbb B^{m_1}\times\cdots \times \mathbb B^{m_l}$ be a product of complex unit balls, where $D_i$, $1\le i\le k$, are irreducible bounded symmetric domains. 
Let $f:D\to \Omega$ be a holomorphic map which maps minimal disks of $D$ properly into rank-$1$ characteristic symmetric subspaces of $\Omega$.
Write $f=(f_1,\ldots,f_l)$, where $f_j:D\to \mathbb B^{m_j}$, $1\le j\le l$, are holomorphic maps.
Then, we have $k\le l$ and up to a permutation of the irreducible factors of $\Omega$, we have
\[ f(Z^1,\ldots,Z^k)
=\begin{cases}(f_1(Z^1),\ldots,f_k(Z^k),f_{k+1}(Z),\ldots,f_l(Z)) &\text{ if } k< l, \\ (f_1(Z^1),\ldots,f_k(Z^k)) & \text{ if } k=l, 
\end{cases}\]
for $Z=(Z^1,\ldots,Z^k)\in D_1\times\cdots \times D_k=D$.
Moreover, $D_i\cong \mathbb B^{n_i}$ is a complex unit ball for some $n_i\ge 1$, $1\le i\le k$, i.e., $D$ is also a product of complex unit balls.
\end{theorem}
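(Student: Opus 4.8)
The plan is to use Lemma~\ref{lem:Pro2.3_Ng15} as the principal tool, supported by Propositions~\ref{Pro:Pro1.2_Ng15} and~\ref{Pro:NonEx_HI_BSD_to_PBs}, arranged so that the assertion ``each $D_i$ is a ball'' is proved first and independently of the block structure. First I would show $\mathrm{rank}(D_i)=1$ for every $i$. Suppose to the contrary that $\mathrm{rank}(D_{i})\ge 2$ for some $i$, and fix $W\in D$. The restriction $f\circ\iota_{i,W}:D_i\to\Omega$ still maps minimal disks of $D_i$ properly into rank-$1$ characteristic symmetric subspaces of $\Omega$. If $l\ge 2$, then $\mathrm{rank}(\Omega)=l\ge 2$, so Proposition~\ref{Pro:Pro1.2_Ng15} shows $f\circ\iota_{i,W}$ is a totally geodesic holomorphic isometric embedding of $(D_i,g_{D_i})$ into the product of balls $(\mathbb B^{m_1},g_{\mathbb B^{m_1}})\times\cdots\times(\mathbb B^{m_l},g_{\mathbb B^{m_l}})$; since $\mathrm{rank}(D_i)\ge 2$, this contradicts Proposition~\ref{Pro:NonEx_HI_BSD_to_PBs}. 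If $l=1$, then a rank-$1$ characteristic symmetric subspace of $\Omega=\mathbb B^{m_1}$ is all of $\mathbb B^{m_1}$; by the Polydisk Theorem there is a totally geodesic bidisk $\Delta^2\subset D_i$ whose two factor disks are minimal disks, so $f\circ\iota_{i,W}$ is proper on each of them, and Lemma~\ref{lem:Pro2.3_Ng15} applied to the bidisk forces the restriction to one factor to be constant --- a contradiction. Hence $D_i\cong\mathbb B^{n_i}$ for every $i$, so $D$ is a product of balls.

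With every factor a ball, I would attach to each $D_i$ a single target index. Every minimal disk $\Delta$ of $D$ lies in exactly one factor $D_i$, and its image lies in a rank-$1$ characteristic symmetric subspace of $\Omega$, i.e.\ inside a single ball factor $\mathbb B^{m_{j}}$; thus precisely one component $f_{j}$ is nonconstant on $\Delta$, which defines an index $j(\Delta)$. Because nonconstancy of a holomorphic map is an open condition and each minimal disk carries a unique nonconstant component, $j(\Delta)$ is locally constant, hence constant on the connected family of minimal disks of $D_i$; denote this value by $\sigma(i)$.

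Next I would prove $\sigma$ is injective, which yields $k\le l$, and then read off the block structure. If $\sigma(i)=\sigma(i')=j$ with $i\neq i'$, then $f_j$ is proper on minimal disks of both $D_i$ and $D_{i'}$. Normalizing base points to $\mathbf 0$ by homogeneity and applying Lemma~\ref{lem:Pro2.3_Ng15} to a minimal disk $\Delta\subset D_i$ against a neighborhood $U\subset D_{i'}$ shows $f_j$ is independent of $Z^{i'}$ along $\Delta$; as $D_i$ is a ball, any two of its points lie on a common minimal disk, so $f_j$ is independent of $Z^{i'}$ on $D_i\times D_{i'}$, contradicting properness of $f_j$ on minimal disks of $D_{i'}$. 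After permuting the factors of $\Omega$ so that $\sigma(i)=i$ for $1\le i\le k$, the same ball-spanning principle finishes the argument: for $t\le k$ with $t\neq i$ the component $f_t$ is constant on every minimal disk of $D_i$, hence constant in $Z^i$, so $f_t=f_t(Z^t)$; and for $t>k$, since $t$ is never a target index, $f_t$ is constant on all minimal disks of $D$ and is therefore a constant map.

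The step I expect to be the main obstacle is upgrading ``independence along one minimal disk'', which is all that Lemma~\ref{lem:Pro2.3_Ng15} directly provides, to ``independence over a whole irreducible factor''. This is exactly why I establish that each $D_i$ is a ball before the structural analysis: in a ball any two points are joined by a minimal (complex-geodesic) disk, and this converts the per-disk conclusion of the lemma into genuine independence in the corresponding variable. The remaining care is routine, namely normalizing base points to $\mathbf 0$ via the homogeneity of the factors so that Lemma~\ref{lem:Pro2.3_Ng15} applies in Harish-Chandra coordinates, and justifying the local constancy of the target index $\sigma(i)$.
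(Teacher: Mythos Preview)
Your proposal is correct and uses the same three ingredients as the paper --- Lemma~\ref{lem:Pro2.3_Ng15}, Proposition~\ref{Pro:Pro1.2_Ng15}, and Proposition~\ref{Pro:NonEx_HI_BSD_to_PBs} --- but it reverses the order of the two halves of the argument. The paper first fixes, for each $i$, a single minimal disk $\hat\Delta^{(i)}\subset D_i$ through $\mathbf 0$, reads off the target index $j_i$, applies Lemma~\ref{lem:Pro2.3_Ng15} once to conclude $f_{j_i}(Z)\equiv f_{j_i}(\mathbf 0;Z^i;\mathbf 0)$, deduces $k\le l$ from the distinctness of the $j_i$, and only afterwards invokes Propositions~\ref{Pro:Pro1.2_Ng15} and~\ref{Pro:NonEx_HI_BSD_to_PBs} to rule out any $D_i$ of rank $\ge 2$. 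You instead prove $D_i\cong\mathbb B^{n_i}$ first and then exploit the geometry of balls --- that any two points lie on a common minimal disk --- to turn the per-disk output of Lemma~\ref{lem:Pro2.3_Ng15} into genuine independence in a whole variable.

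Your reordering buys two things. First, the step ``$f_{j_i}$ depends only on $Z^i$'' becomes transparent: in the paper this is obtained from a single application of Lemma~\ref{lem:Pro2.3_Ng15} along one chosen disk $\hat\Delta^{(i)}$ and the phrase ``by the definition of $M_i$'', whereas your ball-spanning argument makes the passage from one disk to the whole factor explicit. Second, you handle the boundary case $l=1$ directly with the bidisk argument; in the paper the appeal to Proposition~\ref{Pro:Pro1.2_Ng15} at the end formally needs $\mathrm{rank}(\Omega)\ge 2$, so your separate treatment of $l=1$ is a useful addition. Your connectedness argument for the well-definedness of $\sigma(i)$ is sound (nonconstancy of a single component is open, and the family of minimal disks lying in a $D_i$-slice is connected), and your final observation that $f_t$ is actually \emph{constant} for $t>k$ is correct and slightly sharper than the statement of the theorem.
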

\begin{proof}
We may assume without loss of generality that $f({\bf 0})={\bf 0}$. For each $i$, $1\le i\le k$, we choose a minimal disk $\Delta^{(i)}:=\{(z,0,\ldots,0)\in D_i: |z|<1\}\subseteq D_i$. Write $M_i:=D_1\times\cdots\times D_{i-1}\times \Delta^{(i)} \times D_{i+1}\times\cdots \times D_k \subseteq D$ for $1\le i\le k$. Restricting $f$ to the minimal disk $\hat\Delta^{(i)}:=\{(0,\ldots,0)\} \times \Delta^{(i)} \times \{(0,\ldots,0)\}\subseteq M_i \subseteq D$, we have $f(\hat\Delta^{(i)})\subseteq B_i \subset \Omega$ for some rank-$1$ characteristic symmetric subspace $B_i$ of $\Omega$ which contains ${\bf 0}$. Note that such a rank-$1$ characteristic symmetric subspace $B_i$ is exactly $\{(0,\ldots,0)\}\times \mathbb B^{m_{j_i}} \times \{(0,\ldots,0)\}$ for some $j_i$, $1\le j_i\le l$. Thus, $f_j(\hat\Delta^{(i)})=\{{\bf 0}\}$ for $j\neq j_i$, and $f_{j_i}|_{\hat\Delta^{(i)}}:\hat\Delta^{(i)}\to B_i \cong \mathbb B^{m_{j_i}}$ is a proper holomorphic map. We write $Z=(Z^1,\ldots,Z^k)\in D_1\times\cdots \times D_k=D$ and $Z^i\in D_i$ is the Harish-Chandra coordinates of $D_i$, $1\le i\le k$. By \cite[Proposition 2.3]{Ng15} (i.e.,\,Lemma \ref{lem:Pro2.3_Ng15}), we have
\[ f_{j_i}|_{M_i}(Z^1,\ldots,Z^{i-1};z,0,\ldots,0;Z^{i+1},\ldots,Z^k)\equiv f_{j_i}|_{M_i}(0,\ldots,0;z,0,\ldots,0;0,\ldots,0) \]
and thus $f_{j_i}(Z^1,\ldots,Z^k)\equiv f_{j_i}({\bf 0};Z^i;{\bf 0})$ by the definition of $M_i$.
In other words, $f_{j_i}$ is independent of the variables $Z^\mu$ for all $\mu\neq i$, i.e., $f_{j_i}(Z)\equiv f_{j_i}(Z^i)$.
It then follows that for distinct $i_1,i_2$, $1\le i_1,i_2\le k$, we have $j_{i_1}\neq j_{i_2}$ and thus $k\le l$.
We may assume that $j_i=i$ for $1\le i\le k$ after a permutation of the irreducible factors of $\Omega$.
Then, from the above results we have
\[ f(Z^1,\ldots,Z^k)=\begin{cases}(f_1(Z^1),\ldots,f_k(Z^k),f_{k+1}(Z),\ldots,f_l(Z)) &\text{ if } k< l, \\(f_1(Z^1),\ldots,f_k(Z^k)) & \text{ if } k=l. \end{cases}\]
Assume the contrary that $D$ has an irreducible $D_j$ which is of rank $\ge 2$, i.e., $\mathrm{rank}(D_j)\ge 2$.
Then, by restricting to $\{{\bf 0}\} \times D_j\times\{{\bf 0}\}\subset D$, we have a holomorphic map $F$ from $D_j$ to $\Omega=\mathbb B^{m_1}\times\cdots \times \mathbb B^{m_l}$ which maps minimal disks of $D_j$ properly into rank-$1$ characteristic symmetric subspaces of $\Omega$.
By Proposition \ref{Pro:Pro1.2_Ng15}, $F:D_j\to \Omega$ is a totally geodesic holomorphic isometric embedding with respect to certain canonical K\"ahler metrics on $D_j$ and $\Omega$, which contradicts with the result of Proposition \ref{Pro:NonEx_HI_BSD_to_PBs}.
Hence, all irreducible factors of $D_j$ are of rank $1$, i.e., $D_i\cong \mathbb B^{n_i}$ for some positive integer $n_i$, $1\le i\le k$, as desired.
\end{proof}

In general, for a holomorphic map $f:D \to \Omega$ between bounded symmetric domains $D$ and $\Omega$ of the same rank $\ge 2$ which maps minimal disks of $D$ properly into rank-$1$ characteristic symmetric subspaces of $\Omega$, we do not have the analogous structure theorem as in Theorem \ref{thm:H_map_MD_to_rk1CSS_1} if $\Omega$ is not a product of complex unit balls.
Actually, we have the following trivial example.
\begin{Example}\label{Ex:Product_Eq_rk_general}
Let $p_j,q_j$, $1\le j\le 4$, be positive integers such that $p_j\le q_j$ for $1\le j\le 4$.
Let $f:D^{\mathrm{I}}_{p_1,q_1}\times D^{\mathrm{I}}_{p_2,q_2} \times D^{\mathrm{I}}_{p_3,q_3}\times D^{\mathrm{I}}_{p_4,q_4} \to
D^{\mathrm{I}}_{p_1+p_2,q_1+q_2}\times D^{\mathrm{I}}_{p_3+p_4,q_3+q_4}$ be the holomorphic map defined by
\[ f(Z)
:=\left(f_1(Z),f_2(Z)\right) \]
for $Z=(Z^1,Z^2,Z^3,Z^4)\in D^{\mathrm{I}}_{p_1,q_1}\times D^{\mathrm{I}}_{p_2,q_2} \times D^{\mathrm{I}}_{p_3,q_3}\times D^{\mathrm{I}}_{p_4,q_4}$
with
\[ f_1(Z):=\begin{bmatrix}
Z^1 & {\bf 0}\\
{\bf 0} &Z^2
\end{bmatrix},\quad f_2(Z):=\begin{bmatrix}
Z^3 & {\bf 0}\\
{\bf 0} &Z^4
\end{bmatrix}. \]
It is clear that $f$ is a proper holomorphic map between bounded symmetric domains of the same rank $\sum_{j=1}^4 p_j \ge 4$ but none of the $f_1,f_2$ depends only on one of the $Z^1,\ldots,Z^4$.
\end{Example}

\section{Semi-product proper holomorphic maps between bounded symmetric domains}
Motivated by the recent work of Seo \cite{Seo18}, we will study semi-product proper holomorphic maps between (reducible) bounded symmetric domains in this section.
Let $f:D_1\times\cdots \times D_k\to \Omega_1\times\cdots \times \Omega_l$ be a proper holomorphic map, where $D_i$, $1\le i\le k$, and $\Omega_j$, $1\le j \le l$, are irreducible bounded symmetric domains.
Write $Z^j$ $($or $W^j$$)$ for the Harish-Chandra coordinates of $D_j$ for $1\le j\le k$.
In \cite{Seo18}, Seo introduced the notion of semi-product proper holomorphic maps between (reducible) bounded symmetric domains, as follows.

\begin{definition}[cf.\,Seo\,\cite{Seo18}]
The map $f$ is said to be a \textbf{semi-product proper holomorphic map} if for any $i\in \{1,\ldots,k\}$, there exists $j\in \{1,\ldots,l\}$ such that the map $f_{i,j,W}:D_i\to \Omega_j$ defined by
\[ f_{i,j,W}(Z^i) = f_j(W^1,\ldots,W^{i-1},Z^i,W^{i+1},\ldots,W^k) \]
is a proper holomorphic map for $W=(W^1,\ldots,W^{i-1},W^{i+1},\ldots,W^k)$ in some dense open subset of $D_1\times\cdots \times \widehat{D_i}\times\cdots \times D_k$.
Here, $\widehat{D_i}$ means that $D_i$ is omitted.
On the other hand, we say that the map $f$ is a \textbf{product map} if $k=l$ and 
\[ f(Z^1,\ldots,Z^k)=(f_1(Z^{\sigma(1)}),\ldots,f_k(Z^{\sigma(k)}))\] for some permutation $\sigma\in \Sigma_k$ so that each holomorphic map $f_j$ only depends on the holomorphic coordinates of $D_{\sigma(j)}$ for $1\le j\le k$.
\end{definition}
A map $F$ from a bounded domain $D\Subset \mathbb C^n$ to a bounded domain $\Omega\Subset \mathbb C^N$ is said to be \textbf{rational} if all component functions of $F$ are rational functions in $z=(z_1,\ldots,z_n)\in D\Subset \mathbb C^n$, i.e.,
$F=(F_1,\ldots,F_N)$ and $F_j(z)={P_j(z)\over Q_j(z)}$, $1\le j\le N$, for some complex polynomials $P_j,Q_j\in \mathbb C[z]$.
Then, Seo \cite{Seo18} has shown that any rational proper holomorphic map between (reducible) bounded symmetric domains is a semi-product proper holomorphic map, namely, we have
\begin{proposition}[cf. Proposition 3.5 in Seo \cite{Seo18}] \label{Pro:Pro3.5_Seo18}
Let $f:D_1\times\cdots \times D_k\to \Omega_1\times\cdots \times \Omega_l$ be a proper holomorphic map, where $D_i$, $1\le i\le k$, and $\Omega_j$, $1\le j \le l$, are irreducible bounded symmetric domains.
If $f$ is rational, then $f$ is a semi-product proper holomorphic map.
\end{proposition}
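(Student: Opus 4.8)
The plan is to check the semi-product condition separately for each index $i\in\{1,\dots,k\}$, by restricting $f$ to the $D_i$-slices and reading off properness from the defining functions of the target factors; rationality will enter at exactly one place, which I isolate below.

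First I would reduce to a single-slice statement. Fix $i$ and an arbitrary interior point $W=(W^1,\dots,\widehat{W^i},\dots,W^k)$ of $D_1\times\cdots\times\widehat{D_i}\times\cdots\times D_k$, and consider the slice map $g_W\colon D_i\to\Omega$, $g_W(Z^i)=f(W^1,\dots,W^{i-1},Z^i,W^{i+1},\dots,W^k)$. Since $(W^1,\dots,Z^i_\nu,\dots,W^k)\to\partial D$ whenever $Z^i_\nu\to\partial D_i$, properness of $f$ forces $g_W(Z^i_\nu)$ to leave every compact subset of $\Omega$; hence $g_W$ is a proper holomorphic map into $\Omega=\Omega_1\times\cdots\times\Omega_l$, and it is rational as a restriction of $f$. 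The problem is therefore reduced to the claim that a rational proper holomorphic map $g=(g_1,\dots,g_l)\colon D_i\to\Omega_1\times\cdots\times\Omega_l$ from an irreducible bounded symmetric domain has at least one component $g_{j_0}\colon D_i\to\Omega_{j_0}$ that is proper.

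To prove this claim I would fix polynomial defining functions $\rho_j$ with $\Omega_j=\{\rho_j<0\}$ and $\partial\Omega_j\subseteq\{\rho_j=0\}$, and set $\varphi_j:=\rho_j\circ g_j$, which is negative on $D_i$ and, because $g$ is rational, real-algebraic in the Harish--Chandra coordinates. Properness of $g$ into the product means $\max_{1\le j\le l}\varphi_j(Z^i)\to 0$ as $Z^i\to\partial D_i$; equivalently, at every boundary point $\zeta$ at which $g$ extends continuously (which omits only a lower-dimensional set, $g$ being rational) one has $\varphi_j(\zeta)=0$ for some $j$. Thus the regular part of $\partial D_i$ is covered, away from a lower-dimensional set, by the finitely many closed real-algebraic sets $\{\varphi_j=0\}$; since this regular boundary is a connected real hypersurface, a Baire-category and dimension count forces one set $\{\varphi_{j_0}=0\}$ to contain a nonempty open subset $U$ of it. This is the crux, and the one step that uses rationality: $\varphi_{j_0}$ is real-algebraic and vanishes on the open subset $U$ of the connected, irreducible boundary of $D_i$, so by the identity principle (equivalently, $U$ is Zariski-dense in that irreducible real-algebraic hypersurface) $\varphi_{j_0}\equiv 0$ on all of $\partial D_i$. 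This says precisely that $g_{j_0}$ maps $\partial D_i$ into $\partial\Omega_{j_0}$, i.e. $g_{j_0}\colon D_i\to\Omega_{j_0}$ is proper, which proves the single-slice claim. I would stress that this propagation fails without rationality --- one can split the boundary-reaching among several components using singular inner functions already on the disk --- so rationality is genuinely indispensable here.

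It remains to make the index $j_0$ independent of $W$. Running the single-slice claim for every interior $W$ assigns to each $W$ some admissible index; collecting these, the sets $A_j:=\{W: (g_W)_j\text{ is proper into }\Omega_j\}$ are relatively closed in the connected domain $D_1\times\cdots\times\widehat{D_i}\times\cdots\times D_k$, since membership is the vanishing of $\zeta\mapsto\rho_j(f_j(W,\zeta))$ along $\partial D_i$, and they cover it. By Baire one $A_{j_0}$ has nonempty interior, and real-analyticity of these conditions in $W$ then upgrades this to $A_{j_0}$ being the whole domain; in particular $f_{i,j_0,W}$ is proper for $W$ in a dense open set. Carrying this out for each $i$ supplies, for every $i$, an index $j$ as required, which is exactly the definition of a semi-product proper holomorphic map. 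The single-slice propagation step is the main obstacle; the reduction to slices and the uniformity in $W$ are soft by comparison.
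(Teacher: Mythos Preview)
The paper does not prove this proposition; it is stated with the attribution ``cf.\ Proposition 3.5 in Seo \cite{Seo18}'' and invoked as an imported result, so there is no in-paper argument to compare your proposal against.

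That said, your outline follows the natural route (and, in spirit, Seo's): the reduction to slice maps $g_W:D_i\to\Omega$ is immediate from properness of $f$; the substantive step is the single-slice claim that a rational proper map from an irreducible $D_i$ into a product has at least one proper component, and your mechanism---cover the irreducible real-algebraic hypersurface $\partial D_i$ by the finitely many real-algebraic zero sets $\{\rho_j\circ g_j=0\}$, then use Baire plus irreducibility to force one of them to contain the whole boundary---is the right one and is precisely where rationality is indispensable. The uniformity-in-$W$ step is, as you note, soft: the condition ``$\rho_j\circ f_j(W,\cdot)$ vanishes on $\partial D_i$'' is real-algebraic in $W$, so each $A_j$ is a real-algebraic subset of the connected parameter domain, one of them has nonempty interior by Baire, and hence equals the whole domain. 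One point worth making explicit is the passage from ``$\varphi_{j_0}\equiv 0$ on $\partial D_i$ off a thin set'' to ``$g_{j_0}$ is proper'': this uses that $g_{j_0}$ is already bounded (it maps into $\Omega_{j_0}$), so every cluster value of $g_{j_0}$ at a boundary point of $D_i$ lies in $\overline{\Omega_{j_0}}\cap\{\rho_{j_0}=0\}=\partial\Omega_{j_0}$, which handles the indeterminacy locus as well.
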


Motivated by the example of a proper holomorphic map from $D^{\mathrm{I}}_{2,2}$ to $D^{\mathrm{I}}_{3,3}$ constructed by Tsai \cite[p.\,124]{Ts93}, we give an example of a semi-product proper holomorphic map between certain reducible bounded symmetric domains which is neither a product map nor totally geodesic.
\begin{Example}\label{Eg:2}
Let $f:D^{\mathrm{I}}_{2,2}\times D^{\mathrm{I}}_{2,2}\to D^{\mathrm{I}}_{3,3} \times D^{\mathrm{I}}_{3,3}$ be a holomorphic map given by
\[ f(Z^1,Z^2)
=\left( \begin{bmatrix}
Z^1 & 0\\
0 & h_1(Z^2) g_1(Z^1)
\end{bmatrix},
\begin{bmatrix}
Z^2 & 0\\
0 & h_2(Z^1) g_2(Z^2)
\end{bmatrix}\right) \]
for $(Z^1,Z^2)\in D^{\mathrm{I}}_{2,2}\times D^{\mathrm{I}}_{2,2}$,
where $h_j$ and $g_j$ are holomorphic functions on $D^{\mathrm{I}}_{2,2}$ such that for any $W\in D^{\mathrm{I}}_{2,2}$ we have $|h_j(W)|<1$ and $|g_j(W)|<1$, $j=1,2$. Then, it is clear that $f$ is a semi-product proper holomorphic map but not a product map.
In addition, we can choose the holomorphic functions $h_j$ and $g_j$, $j=1,2$, such that $f$ is not totally geodesic.
This also shows the existence of a semi-product proper holomorphic map between bounded symmetric domains which is not a rational map.

We can actually obtain lots of holomorphic maps from $D^{\mathrm{I}}_{2,2}$ to $\Delta:=\{w\in \mathbb C: |w|<1\}$.
Write $W=\begin{pmatrix} w_{ij} \end{pmatrix}_{1\le i,j\le 2}$ and let $p(W)$ be a polynomial in $($$w_{11}$, $w_{12}$, $w_{21}$, $w_{22}$$)$.
Let $M:=\sup_{W\in \overline{D^{\mathrm{I}}_{2,2}}} |p(W)|$.
Then, we have $M<+\infty$ by the boundedness of $D^{\mathrm{I}}_{2,2}$.
Moreover, by the maximum modulus principle we actually have $|p(W)|<M$ for any $W\in D^{I}_{2,2}$ because $p$ is a non-constant holomorphic function on the bounded domain $D^{\mathrm{I}}_{2,2}$.
We define $h(W):={1\over M} p(W)$.
Then, for any $W\in D^{\mathrm{I}}_{2,2}$ we have
$|h(W)|={1\over M} |p(W)| < 1$.
Thus, $h:D^{\mathrm{I}}_{2,2}\to \mathbb C$ is a holomorphic function  such that for any $W\in D^{\mathrm{I}}_{2,2}$ we have $|h(W)|<1$.
In general, we may replace the polynomial $p(W)$ by any non-constant bounded holomorphic function on $D^{\mathrm{I}}_{2,2}$ in the above.
\end{Example}

In analogy to Lemma \ref{lem:Pro2.3_Ng15}, Seo \cite{Seo18} obtained the following result.

\begin{lemma}[cf.\,Corollary 2.3 in \cite{Seo18}]\label{lem:Cor2.3_Seo18}
Let $D$ and $\Omega$ be irreducible bounded symmetric domains such that $\mathrm{rank}(D)\ge \mathrm{rank}(\Omega)$.
We also let $F: D \times U \to \Omega$ be a holomorphic map such that $F|_{D\times\{w\}}: D \cong D\times\{w\} \to \Omega$ is a proper holomorphic map for each $w\in U$, where $U\Subset \mathbb C^m$ is a connected bounded domain. Then, $f$ does not depend on $w\in U$.
\end{lemma}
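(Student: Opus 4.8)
The plan is to first reduce to the equal-rank situation and then to handle the rank-one and the higher-rank cases by genuinely different mechanisms. Writing $F_w:=F|_{D\times\{w\}}$, I note that each $F_w\colon D\to\Omega$ is a proper holomorphic map between the irreducible domains $D$ and $\Omega$, so Tsai's inequality $\mathrm{rank}(D)\le\mathrm{rank}(\Omega)$ (recalled in the Introduction), together with the hypothesis $\mathrm{rank}(D)\ge\mathrm{rank}(\Omega)$, forces $r:=\mathrm{rank}(D)=\mathrm{rank}(\Omega)$ whenever a proper slice exists (and the statement is vacuous otherwise). I fix a base parameter $w_0\in U$ and aim to prove $F(\cdot,w)\equiv F(\cdot,w_0)$, which is the assertion that $F$ is independent of $w$.

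\emph{The rank-one case $r=1$.} Here $D\cong\mathbb B^a$ and $\Omega\cong\mathbb B^b$ are complex unit balls, and I would reduce directly to Lemma~\ref{lem:Pro2.3_Ng15}. Fix $p\in\mathbb B^a$ and let $\ell$ be any complex line through $p$, so that $\ell\cap\mathbb B^a$ is biholomorphic to $\Delta$ via an affine parametrization. Since $F_{w_0}$ is proper, $\|F_{w_0}(\zeta)\|\to 1$ as $\zeta\to\partial\mathbb B^a$; because $\partial(\ell\cap\mathbb B^a)\subset\partial\mathbb B^a$, the restriction $F_{w_0}|_{\ell\cap\mathbb B^a}\colon\Delta\to\mathbb B^b$ is again proper. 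Applying Lemma~\ref{lem:Pro2.3_Ng15}, with the distinguished sub-ball taken to be the whole target $\mathbb B^b$, to the holomorphic map $(\zeta,w)\mapsto F(\ell(\zeta),w)$ on $\Delta\times U$ yields $F(\ell(\zeta),w)=F(\ell(\zeta),w_0)$ for all $\zeta$ and all $w$. As the lines through $p$ sweep out $\mathbb B^a$, this gives $F(\cdot,w)=F(\cdot,w_0)$ on all of $\mathbb B^a$.

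\emph{The higher-rank case $r\ge 2$.} Now each $F_w$ is, by Tsai's Main Theorem, a totally geodesic holomorphic isometric embedding of $(D,g_D)$ into $(\Omega,g_\Omega)$ up to a normalizing constant $\lambda_w>0$; and since $F_w$ maps minimal disks of $D$ properly into rank-$1$ characteristic symmetric subspaces of $\Omega$ (Lemma~\ref{lem_Preserve_rk1CSS}), the normalization remark following Proposition~\ref{Pro:Pro1.2_Ng15} forces $\lambda_w=1$ for every $w$. Thus all $F_w$ are genuine holomorphic isometric embeddings and so preserve Calabi's diastasis, giving $\mathrm{Diast}_\Omega(F_w(z),F_w(z'))=\mathrm{Diast}_D(z,z')$ for all $z,z'\in D$ and all $w$. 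In particular $F_w$ and $F_{w_0}$ induce the same diastasis on their images, so Calabi's rigidity theorem, combined with the fact that local holomorphic isometries of a bounded symmetric domain extend to global automorphisms, supplies a holomorphic isometry $\gamma_w\in\mathrm{Aut}(\Omega)_0$ with $F_w=\gamma_w\circ F_{w_0}$ and $\gamma_{w_0}=\mathrm{id}$; here $\gamma_w$ is uniquely determined because $F_{w_0}$ is an embedding and an automorphism is determined by its restriction to any open piece of $F_{w_0}(D)$. It then remains to show $\{\gamma_w\}$ is constant. Since $F_w=\gamma_w\circ F_{w_0}$ is holomorphic in $(z,w)$ and $F_{w_0}$ is a holomorphic embedding independent of $w$, one deduces $\partial_{\bar w_\alpha}\gamma_w\equiv 0$ along $F_{w_0}(D)$, hence on all of $\Omega$, so $w\mapsto\gamma_w$ is holomorphic. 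Consequently each velocity $X_\alpha:=\partial_{w_\alpha}\gamma_w|_{w_0}$ is a holomorphic vector field on $\Omega$ lying in the real Lie algebra $\mathrm{aut}(\Omega)$, and holomorphicity of $w\mapsto\gamma_w$ forces both $X_\alpha$ and $iX_\alpha$ to lie in $\mathrm{aut}(\Omega)$. But $\mathrm{aut}(\Omega)$ is a real form inside the complex Lie algebra of degree-$\le 2$ holomorphic polynomial vector fields on $\Omega$, so $\mathrm{aut}(\Omega)\cap i\,\mathrm{aut}(\Omega)=\{0\}$ and hence $X_\alpha=0$. Running this at every $w$ gives $\gamma_w\equiv\mathrm{id}$, i.e. $F_w\equiv F_{w_0}$.

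\emph{Main obstacle.} The rank-one case is essentially immediate once one observes that the line slices of a proper map remain proper. The delicate part is the higher-rank case: I must pin the normalizing constant to $\lambda_w=1$ and, above all, upgrade the pointwise isometries $\gamma_w$ produced by Calabi rigidity to a genuinely \emph{holomorphic} one-parameter family of automorphisms, so that the reality relation $\mathrm{aut}(\Omega)\cap i\,\mathrm{aut}(\Omega)=\{0\}$ can be invoked. Verifying that $\gamma_w$ depends holomorphically (not merely real-analytically) on $w$, and that it is globally well defined as an element of $\mathrm{Aut}(\Omega)_0$ rather than only on $F_{w_0}(D)$, is where the main care is needed.
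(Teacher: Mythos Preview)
The paper does not supply its own proof of this lemma; it is quoted from Seo~\cite{Seo18} (Corollary~2.3 there) and used as a black box, so there is no in-paper argument to compare your attempt against.

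Your rank-one reduction via Lemma~\ref{lem:Pro2.3_Ng15} is correct: a proper holomorphic map $\mathbb B^a\to\mathbb B^b$ restricts to a proper map on every affine disk slice, and that lemma then kills the $w$-dependence line by line.

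The higher-rank argument, however, has a genuine gap that you flag but do not close. The assertion that ``an automorphism is determined by its restriction to any open piece of $F_{w_0}(D)$'' is false whenever $\dim D<\dim\Omega$. Concretely, for the standard embedding $D=D^{\mathrm I}_{2,2}\hookrightarrow\Omega=D^{\mathrm I}_{2,3}$ given by $Z\mapsto[\,Z\;\; 0\,]$, every isotropy element $Z'\mapsto Z'\cdot\mathrm{diag}(I_2,e^{i\theta})$ fixes the image pointwise, so $\gamma_w$ is never unique. Without a well-defined assignment $w\mapsto\gamma_w$ the subsequent holomorphicity discussion cannot start; and even granting some measurable choice together with smoothness in $w$, the step ``$\partial_{\bar w_\alpha}\gamma_w=0$ on $F_{w_0}(D)$, hence on all of $\Omega$'' fails for the same dimensional reason---a holomorphic function on $\Omega$ that vanishes on a proper complex submanifold need not vanish identically. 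The slogan you are reaching for, that a holomorphic map from $U$ into a real Lie group is constant, is correct in principle, but invoking it requires first producing a canonical (hence holomorphic) lift of $w\mapsto F_w$ from the homogeneous space $\mathrm{Aut}(\Omega)/H$ (with $H$ the pointwise stabilizer of $F_{w_0}(D)$) back to $\mathrm{Aut}(\Omega)$, and nothing in your outline accomplishes this.
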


For any (reducible) bounded symmetric domain $U$, we write
\[ R_U:=\{\mathrm{rank}(U'): U'\text{ is an irreducible factor of $U$}\} \]
and we define $r_{\mathrm{min}}(U):=\min R_U$ and $r_{\mathrm{max}}(U):=\max R_U$.
We remark here that there are reducible bounded symmetric domains $D$ and $\Omega$ such that $r_{\mathrm{min}}(D)\ge r_{\mathrm{max}}(\Omega)$ and $\mathrm{rank}(D)<\mathrm{rank}(\Omega)$.
For example, for $D=D^{\mathrm{I}}_{3,p_1}\times D^{\mathrm{I}}_{3,p_2}$ and $\Omega = D^{\mathrm{I}}_{3,q_1}\times D^{\mathrm{I}}_{3,q_2} \times D^{\mathrm{I}}_{3,q_3}$, where $p_1,p_2,q_1,q_2,q_3\ge 3$ are integers, we have $r_{\mathrm{min}}(D)=3= r_{\mathrm{max}}(\Omega)$ but $\mathrm{rank}(D)=6<9=\mathrm{rank}(\Omega)$.

From Example \ref{Eg:2}, there is a semi-product proper holomorphic map $f:D\to \Omega$ which is nonstandard and not a product map even if $r_{\mathrm{min}}(D) = r_{\mathrm{max}}(\Omega)-1$ and that $D$ and $\Omega$ have the same number of irreducible factors.
Therefore, for a semi-product proper holomorphic map $f:D\to \Omega$ between bounded symmetric domains $D$ and $\Omega$, by imposing a certain rank condition on $D$ and $\Omega$, namely, $r_{\mathrm{min}}(D) \ge r_{\mathrm{max}}(\Omega)$, we have
\begin{theorem}\label{thm:Structure_PHM1}
Let $D=D_1\times\cdots \times D_k$ and $\Omega=\Omega_1\times\cdots \times \Omega_l$ be bounded symmetric domains, where $D_i$, $1\le i\le k$, and $\Omega_j$, $1\le j\le l$, are irreducible bounded symmetric domains.
Let $f=(f_1,\ldots,f_l):D\to \Omega$ be a semi-product proper holomorphic map.
If $r_{\mathrm{min}}(D)=\min\{\mathrm{rank}(D_i): 1\le i\le k\} \ge \max\{\mathrm{rank}(\Omega_j):1\le j \le l\}=r_{\mathrm{max}}(\Omega)$, then $k\le l$ and we have the following.
\begin{enumerate}
\item Suppose $k=l$.
Then, we have 
\begin{enumerate}
\item[(a)] $\mathrm{rank}(D)=\mathrm{rank}(\Omega)$, $r_{\mathrm{min}}(D)=r_{\mathrm{max}}(\Omega)=:r$ and $\mathrm{rank}(D_i)$ $=$ $\mathrm{rank}(\Omega_j)$ $=$ $r$ for all $i$, $j$, $1\le i\le k$, $1\le j\le l$.
\item[(b)] $f$ is a product map, i.e.,
\[ f(Z^1,\ldots,Z^k) = (f_1(Z^{\sigma(1)}),\ldots,f_k(Z^{\sigma(k)})) \] for some permutation $\sigma\in \Sigma_k$, where $Z^j\in D_j$ for $j=1,\ldots,k$.
\end{enumerate}
\noindent If in addition that $r_{\mathrm{max}}(\Omega)\ge 2$, then $f:D\to \Omega$ is a totally geodesic holomorphic isometric embedding with respect to certain canonical K\"ahler metrics on $D$ and $\Omega$.
\item Suppose $k<l$.
Then, up to a permutation of the irreducible factors $\Omega_j$, $1\le j\le l$, of $\Omega$, we have
\[ f(Z^1,\ldots,Z^k)=(f_1(Z^1),\ldots,f_k(Z^k),f_{k+1}(Z),\ldots,f_l(Z)) \]
for $Z=(Z^1,\ldots,Z^k)\in D_1\times\cdots \times D_k=D$, and for each $i$, $1\le i\le k$, we have $\mathrm{rank}(D_i)=r_{\mathrm{min}}(D)=r_{\mathrm{max}}(\Omega)
=\mathrm{rank}(\Omega_i)$ and $f_i:D_i\to \Omega_i$ is a proper holomorphic map.
If in addition that $r_{\mathrm{max}}(\Omega)\ge 2$, then for $1\le i\le k$, $f_i:D_i\to \Omega_i$ is a totally geodesic holomorphic isometric embedding with respect to the Bergman metrics up to a normalizing constant.
\end{enumerate}
\end{theorem}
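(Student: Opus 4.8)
The plan is to encode the semi-product hypothesis as an index assignment $\phi\colon\{1,\dots,k\}\to\{1,\dots,l\}$, prove that $\phi$ is injective (which yields $k\le l$) and simultaneously dictates the coordinate dependence of each component $f_j$, and finally to feed the resulting proper factor maps $f_i\colon D_i\to\Omega_i$ into Tsai's rigidity theorem. The ordering matters: I must extract the rank equalities first, since they are precisely what licenses the later application of Lemma \ref{lem:Cor2.3_Seo18}.

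First I would fix, for each $i$, an index $\phi(i)=j$ supplied by the semi-product property, so that $f_{i,\phi(i),W}\colon D_i\to\Omega_{\phi(i)}$ is a proper holomorphic map between irreducible bounded symmetric domains for all $W$ in some dense open subset. Tsai's rank inequality gives $\mathrm{rank}(D_i)\le\mathrm{rank}(\Omega_{\phi(i)})$, while the standing hypothesis forces $\mathrm{rank}(\Omega_{\phi(i)})\le r_{\mathrm{max}}(\Omega)\le r_{\mathrm{min}}(D)\le\mathrm{rank}(D_i)$. The chain collapses to equalities, so $\mathrm{rank}(D_i)=\mathrm{rank}(\Omega_{\phi(i)})=r_{\mathrm{min}}(D)=r_{\mathrm{max}}(\Omega)=:r$ for every $i$; in particular every irreducible factor $D_i$ has rank exactly $r$.

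The core step is to show that $f_{\phi(i)}$ depends only on $Z^i$. I would choose a connected open set $U\subseteq\prod_{\mu\ne i}D_\mu$ lying inside the dense open locus on which $f_{i,\phi(i),W}$ is proper, and view $f_{\phi(i)}$ as a holomorphic map $D_i\times U\to\Omega_{\phi(i)}$ that is proper on every slice $D_i\times\{w\}$. Because $\mathrm{rank}(D_i)=\mathrm{rank}(\Omega_{\phi(i)})$, Lemma \ref{lem:Cor2.3_Seo18} applies and shows $f_{\phi(i)}$ is independent of $w\in U$; by the identity theorem $f_{\phi(i)}\equiv f_{\phi(i)}(Z^i)$ on all of $D$. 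Injectivity of $\phi$ is then immediate: if $\phi(i_1)=\phi(i_2)=j$ with $i_1\ne i_2$, then $f_j$ depends only on $Z^{i_1}$, so $f_{i_2,j,W}$ (which is $f_j$ as a function of $Z^{i_2}$ with the remaining coordinates frozen) is constant, contradicting that it is proper onto the positive-dimensional $\Omega_j$. Hence $\phi$ is injective and $k\le l$.

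It remains to assemble the two cases. After permuting the factors of $\Omega$ so that $\phi(i)=i$, each $f_i$ depends only on $Z^i$ and $f_i\colon D_i\to\Omega_i$ is proper with $\mathrm{rank}(D_i)=\mathrm{rank}(\Omega_i)=r$. If $k=l$ then $\phi$ is a bijection, every $\Omega_j$ has rank $r$, $\mathrm{rank}(D)=kr=\mathrm{rank}(\Omega)$, and writing $\sigma=\phi^{-1}$ exhibits $f$ as the product map $f(Z)=(f_1(Z^{\sigma(1)}),\dots,f_k(Z^{\sigma(k)}))$, which gives (1)(a) and (1)(b); if $k<l$ the components $f_{k+1},\dots,f_l$ are simply left to depend on all of $Z$, giving the form in (2). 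When $r_{\mathrm{max}}(\Omega)=r\ge 2$, each $f_i\colon D_i\to\Omega_i$ is a proper holomorphic map between irreducible bounded symmetric domains of equal rank $\ge 2$, so Tsai's Main Theorem makes it a totally geodesic holomorphic isometric embedding with respect to the Bergman metrics up to a normalizing constant; in the product case these assemble, after matching normalizing constants, into a totally geodesic isometric embedding for suitable canonical K\"ahler metrics on $D$ and $\Omega$. The main obstacle I anticipate is the core step: the semi-product property only furnishes properness on a dense open subset, so the delicate point is to produce a genuine connected open $U$ on which slice-properness holds so that Lemma \ref{lem:Cor2.3_Seo18} can be invoked, and then to propagate the independence from $D_i\times U$ to all of $D$ by analytic continuation.
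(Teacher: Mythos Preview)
Your proof is correct and uses the same three ingredients as the paper---Tsai's rank inequality, Seo's Corollary 2.3 (Lemma \ref{lem:Cor2.3_Seo18}), and Tsai's Main Theorem---but you arrange them in a different order. The paper first argues injectivity of the index assignment by claiming that if $j_1=j_2$ then the restriction of $f_{j_1}$ to $D_{i_1}\times D_{i_2}$ (other coordinates frozen) is proper, and then derives a rank contradiction; only afterwards does it invoke Lemma \ref{lem:Cor2.3_Seo18} to obtain coordinate independence. You instead establish the rank equalities $\mathrm{rank}(D_i)=\mathrm{rank}(\Omega_{\phi(i)})$ first, then apply Lemma \ref{lem:Cor2.3_Seo18} to get $f_{\phi(i)}\equiv f_{\phi(i)}(Z^i)$, and finally deduce injectivity from the observation that $f_{i_2,j,W}$ would be constant. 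Your route has the advantage that the injectivity step is completely elementary (a constant map to a positive-dimensional domain cannot be proper) and avoids having to justify why slice-wise properness on a dense open set implies properness on the two-factor product---a point the paper passes over rather quickly. The obstacle you flag at the end (producing a connected open $U$ inside the dense open locus and then propagating by analytic continuation) is handled exactly as you indicate and causes no difficulty.
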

\begin{proof}
Our method here is inspired by the proof of Proposition 3.4 in Seo \cite{Seo18}.
Since $f$ is a semi-product map, for $1\le i_1<i_2 \le k$, there are $j_1,j_2\in \{1,\ldots,l\}$ such that
$f_{i_\mu,j_\mu,w^{(\mu)}}:D_{i_\mu} \to \Omega_{j_\mu}$ defined by 
\[ f_{i_\mu,j_\mu,w^{(\mu)}}(Z^{i_\mu}) = f_{j_\mu}(w^{(\mu)}_1,\ldots,w^{(\mu)}_{i_\mu - 1}, Z^{i_\mu}, w^{(\mu)}_{i_\mu + 1},\ldots,w^{(\mu)}_k)\]
is a proper holomorphic map
for $w^{(\mu)}=(w^{(\mu)}_1,\ldots,w^{(\mu)}_{i_\mu - 1}, w^{(\mu)}_{i_\mu + 1},\ldots,w^{(\mu)}_k) \in D_1\times\cdots \times \widehat{D_{i_\mu}} \times \cdots \times D_k$, $\mu=1,2$.
Here, $\widehat{D_{i_\mu}}$ means that the factor $D_{i_\mu}$ is omitted.
If $j_1=j_2$, then
\[ f_{j_1}(w_1,\ldots,w_{i_1 - 1}, \cdot, w_{i_1 + 1},\ldots,w_{i_2-1},\cdot,w_{i_2+1}, \ldots,w_k): D_{i_1}\times D_{i_2} \to \Omega_{j_1} \]
is a proper holomorphic map, a plain contradiction because $\mathrm{rank}(D_{i_1}\times D_{i_2})>\mathrm{rank}(D_{i_1}) \ge \mathrm{rank}(\Omega_{j_1})$ by the assumption (cf. \cite{Ts93}).
Thus, we have $j_1\neq j_2$.
In particular, for any $i\in \{1,\ldots,k\}$, there exists $n_i\in \{1,\ldots,k\}$ such that $f_{i,n_i,w}:D_i \to \Omega_{n_i}$ is a proper holomorphic map for $w\in D_1\times \cdots \times \widehat{D_i} \times \cdots \times D_k$ and $n_i\neq n_\mu$ whenever $i\neq \mu$.
Then, we have $k\le l$.

Now, we may assume that $n_i=i$ for $i=1,\ldots,k$ after permuting the irreducible factors of $\Omega$.
Note that $\mathrm{rank}(D_i) \ge r_{\mathrm{min}}(D) \ge r_{\mathrm{max}}(\Omega) \ge \mathrm{rank}(\Omega_i)$ for $1\le i\le k$.
Applying Corollary 2.3 in Seo \cite{Seo18} (i.e.,\,Lemma \ref{lem:Cor2.3_Seo18}) to $f_{i,i,w}$ for each $i$, we obtain that $f_i$ depends only on $Z^i\in D_i$ and $f_i:D_i\to \Omega_i$ is a proper holomorphic map for $1\le i\le k$.

\medskip
\noindent\textbf{Case (1)} Suppose $k=l$. Then, $f$ is a product map.
Moreover, we have
\begin{equation}\label{Eq:rk1}
\mathrm{rank}(D) \ge k r_{\mathrm{min}} (D) 
\ge k r_{\mathrm{max}}(\Omega) 
  = l r_{\mathrm{max}}(\Omega) 
\ge \mathrm{rank}(\Omega),
\end{equation}
i.e., $\mathrm{rank}(D) \ge \mathrm{rank}(\Omega)$.
From Tsai \cite[p.\,129]{Ts93}, we have $\mathrm{rank}(D) = \mathrm{rank}(\Omega)$. Thus, each inequality in Eq. (\ref{Eq:rk1}) is actually an equality. In particular, we have 
\[ \mathrm{rank}(D_i)=r_{\mathrm{min}} (D)=r_{\mathrm{max}}(\Omega)=\mathrm{rank}(\Omega_j) \]
for all $i$, $j$, $1\le i\le k$, $1\le j\le l$.

If in addition that $r_{\mathrm{max}}(\Omega)\ge 2$, then we have $\mathrm{rank}(D_i) \ge r_{\mathrm{min}}(D) \ge r_{\mathrm{max}}(\Omega)\ge \mathrm{rank}(\Omega_i) \ge 2$ for $1\le i\le k$. By Tsai \cite[Main Theorem]{Ts93}, $f_i:D_i\to \Omega_i$ is a totally geodesic holomorphic isometric embedding with respect to the Bergman metrics up to a normalizing constant for $1\le i\le k$. Hence, $f:D\to \Omega$ is a totally geodesic holomorphic isometric embedding with respect to certain canonical K\"ahler metrics on $D$ and $\Omega$. (Noting that the result also follows directly from Theorem \ref{thm:TrIrr2} in this situation.)

\medskip
\noindent\textbf{Case (2)} Suppose $k<l$. From the above, we have
\[ f(Z^1,\ldots,Z^k)=(f_1(Z^1),\ldots,f_k(Z^k),f_{k+1}(Z),\ldots,f_l(Z)) \]
after permuting the irreducible factors $\Omega_j$, $1\le j\le k$, of $\Omega$. The rest follows from the arguments in Case (1).

\end{proof}

\begin{remark}\text{}
\begin{enumerate}
\item By Proposition 3.5 in Seo \cite{Seo18} and Theorem \ref{thm:Structure_PHM1}, we know that any rational proper holomorphic map $f$ from $D_1 \times \cdots \times D_k$ to
$\Omega_1\times\cdots \times \Omega_k$ is a product map whenever $\mathrm{rank}(D_i)=\mathrm{rank}(\Omega_j)=r$ for all $i,j$, $1\le i,j\le k$ and $r$ is independent of $i$ and $j$.
\item Define a holomorphic map $f:D^{\mathrm{III}}_3\times D^{\mathrm{III}}_3 \to D^{\mathrm{I}}_{3,q_1}\times D^{\mathrm{I}}_{3,q_2} \times \Delta$ by
\[ f(Z^1,Z^2) := \left( \begin{bmatrix}
Z^1 & {\bf 0}
\end{bmatrix}, \begin{bmatrix}
Z^2 & {\bf 0}
\end{bmatrix}, h(Z^1,Z^2) \right), \]
where $q_1,q_2\ge 3$ are integers, and $h:D^{\mathrm{III}}_3\times D^{\mathrm{III}}_3 \to \mathbb C$ is a holomorphic function such that $|h(Z^1,Z^2)|<1$ on $D^{\mathrm{III}}_3\times D^{\mathrm{III}}_3$.
{\rm(}Noting that $r_{\mathrm{min}}(D^{\mathrm{III}}_3\times D^{\mathrm{III}}_3)=3 = r_{\mathrm{max}}(D^{\mathrm{I}}_{3,q_1}\times D^{\mathrm{I}}_{3,q_2} \times \Delta)$ because $q_1,q_2\ge 3$.{\rm)}
Then, we may choose a function $h$ so that $f:D^{\mathrm{III}}_3\times D^{\mathrm{III}}_3 \to D^{\mathrm{I}}_{3,q_1}\times D^{\mathrm{I}}_{3,q_2} \times \Delta$ is a semi-product proper holomorphic map which is not totally geodesic. That means in Case {\rm(2)} of Theorem \ref{thm:Structure_PHM1}, it is possible that such a semi-product proper holomorphic map is not totally geodesic.
\item By Proposition 3.5 in \cite{Seo18} {\rm(}i.e.,\,Proposition \ref{Pro:Pro3.5_Seo18}{\rm)}, the statement of Theorem \ref{thm:Structure_PHM1} still holds true if we assume that $f$ is rational instead of $f$ is semi-product. In other words, Theorem \ref{thm:Structure_PHM1} gives a complete description of all rational proper holomorphic maps $f:D\to \Omega$ between {\rm (}reducible{\rm )} bounded symmetric domains when $r_{\mathrm{min}}(D)\ge r_{\mathrm{max}}(\Omega)$.
\end{enumerate}
\end{remark}

\begin{center}
\textsc{Acknowledgment}
\end{center}
The author would like to thank the anonymous referee for helpful suggestions.


\begin{thebibliography}{XXXXX}
\bibitem[Ch19]{Ch19} Shan Tai Chan: \emph{Remarks on holomorphic isometric embeddings between bounded symmetric domains},
Complex Anal. Synerg. 5 (2019), 5:7.
\bibitem[CXY17]{CXY17} Shan Tai Chan, Ming Xiao and Yuan Yuan: \emph{Holomorphic maps between products of complex unit balls}, Internat. J. Math. 28 (2017), no. 9, 1740010, 22 pp.
\bibitem[D88]{D88} D'Angelo, John P.: \emph{Proper holomorphic maps between balls of different dimensions}, Michigan Math. J. 35 (1988),83--90.
\bibitem[Mok89]{Mok89} Ngaiming Mok: \emph{Metric rigidity theorems on Hermitian locally symmetric manifolds}, Series in Pure Mathematics, Vol. 6, World Scientific Publishing Co., Singapore; Teaneck, NJ, 1989.
\bibitem[Mok12]{Mok12} Ngaiming Mok: \emph{Extension of germs of holomorphic isometries up to normalizing constants with respect to the Bergman metric}, 
J. Eur. Math. Soc. (JEMS) 14 (2012), 1617--1656.
\bibitem[Mok16]{Mok16} Ngaiming Mok: \emph{Holomorphic isometries of the complex unit ball into irreducible bounded symmetric domains}, Proc. Amer. Math. Soc. 144 (2016), pp. 4515--4525.
\bibitem[MT92]{MT92} Ngaiming Mok and I-Hsun Tsai: \emph{Rigidity of convex realizations of irreducible bounded symmetric domains of rank $\ge 2$}, J. reine angew. Math. 431 (1992), 91--122.
\bibitem[Ng15]{Ng15} Sui-Chung Ng: \emph{On proper holomorphic mappings among irreducible bounded symmetric domains of rank at least $2$}, Proc. Amer. Math. Soc. 143 (2015), 219--225.
\bibitem[Seo18]{Seo18} Aeryeong Seo: \emph{Remark on Proper Holomorphic Maps Between Reducible Bounded Symmetric Domains}, Taiwanese J. Math. 22 (2018), 325--337.
\bibitem[Ts93]{Ts93} I-Hsun Tsai: \emph{Rigidity of proper holomorphic maps between symmetric domains}, J. Differential Geom. 37 (1993), pp. 123--160.
\bibitem[Wo72]{Wo72} Joseph A. Wolf: \emph{Fine structures of Hermitian symmetric spaces}, Symmetric spaces (Short Courses, Washington Univ., St. Louis, Mo., 1969-1970), pp. 271--357. Pure and App. Math., Vol. 8, Dekker, New York, 1972.
\bibitem[YZ12]{YZ12} Yuan Yuan and Yuan Zhang: \emph{Rigidity for local holomorphic isometric embeddings from $\mathbb B^n$ into $\mathbb B^{N_1}\times\cdots \times \mathbb B^{N_m}$ up to conformal factors}, J. Differential Geometry \textbf{90} (2012) pp. 329--349.
\end{thebibliography}
\end{document}